\documentclass[reqno,11pt]{amsart}

\usepackage{a4wide}
\usepackage[english]{babel}
\usepackage[utf8]{inputenc}
\usepackage{amsmath,amssymb}
\usepackage{dsfont}
\usepackage{tikz}
\usepackage[hidelinks]{hyperref}

\theoremstyle{definition}
\newtheorem{thm}{Theorem}[section]
\newtheorem{lem}[thm]{Lemma}
\newtheorem{prop}[thm]{Proposition}

\newtheorem{defi}[thm]{Definition}
\newtheorem{rem}[thm]{Remark}
\numberwithin{equation}{section}

\renewcommand{\Im}{\operatorname{Im}}
\newcommand{\ran}{\operatorname{ran}}
\newcommand{\dom}{\operatorname{dom}}
\newcommand{\poly}{{\text{poly}}}

\title[]{The $H^\infty$-functional calculus for right slice hyperholomorphic functions and right linear Clifford operators}

\author[Fabrizio Colombo]{F. Colombo}
\address{(FC) Politecnico di Milano, Dipartimento di Matematica, Via E. Bonardi 9, 20133 Milano, Italy}
\email{fabrizio.colombo@polimi.it}

\author[Francesco Mantovani]{F. Mantovani}
\address{(FM) Politecnico di Milano, Dipartimento di Matematica, Via E. Bonardi 9, 20133 Milano, Italy}
\email{francesco.mantovani@polimi.it}

\author[P. Schlosser]{P. Schlosser}
\address{(PS) Graz University of Technology, Institut f\"ur Angewandte Mathematik, Steyrergasse 30, 8010 Graz, Austria}
\email{pschlosser@math.tugraz.at}
\parindent 0pt

\begin{document}

\begin{abstract}
In 2016, the spectral theory on the $S$-spectrum was used to establish the $H^\infty$-functional calculus for quaternionic or Clifford operators. This calculus applies for example to sectorial or bisectorial right linear operators $T$ and left slice hyperholomorphic functions $f$ that can grow as polynomials. It relies on the product of the two operators $e(T)^{-1}$ and $(ef)(T)$, both defined via some underlying $S$-functional calculus (also called $\omega$-functional calculus). For left slice holomorphic functions $f$ this definition does not depend on the choice of the regularizer function $e$. However, due to the non-commutative multiplication of Clifford numbers, it was unclear how to extend this definition to right slice hyperholomorphic functions. This paper addresses this significant unresolved issue and shows how right linear operators can possess the $H^\infty$-functional calculus also for right slice hyperholomorphic functions.
\end{abstract}

\maketitle

AMS Classification: 47A10, 47A60. \medskip

Keywords: $S$-spectrum, $S$-resolvent operator, $H^\infty$-functional calculus, Slice hyperholomorphic functions. \medskip

\textbf{Acknowledgements:} Fabrizio Colombo is supported by MUR grant Dipartimento di Eccellenza 2023-2027. Peter Schlosser was funded by the Austrian Science Fund (FWF) under Grant No. J 4685-N and by the European Union--NextGenerationEU.

\section{Introduction}

The $H^\infty$-functional calculus applies to sectorial and bisectorial operators, and extends the holomorphic functional calculus, often referred to as the Riesz-Dunford functional calculus \cite{RD}, to functions that do not decay sufficiently fast at zero and infinity. For a comprehensive exploration of this calculus, we recommend the books \cite{Haase,HYTONBOOK1,HYTONBOOK2} along with their associated references. \medskip

The spectral theory on the $S$-spectrum has fundamental differences compared to classical complex spectral theory, because it is derived from the classical resolvent series expansion, without assuming that the operator and the spectral parameter commute. Specifically, for bounded operators $T:V\rightarrow V$ acting in a Clifford module $V$, the series expansion of the $S$-resolvent becomes
\begin{equation}\label{Eq_Resolvent_sum}
\sum_{n=0}^\infty T^ns^{-n-1}=(T^2-2s_0T+|s|^2)^{-1}(\overline{s}-T),\qquad|s|>\Vert T\Vert,
\end{equation}
where $s=s_0+s_1e_1+\dots+s_ne_n$ is a paravector, $|s|$ is its modulus and $\overline{s}$ the conjugate of $s$. The reason why the value of the series \eqref{Eq_Resolvent_sum} does not simplify to the classical resolvent operator $(s-T)^{-1}$ is due to the noncommutativity $sT\neq Ts$. The explicit value of the sum in equation \eqref{Eq_Resolvent_sum} motivates that, even for unbounded, closed operators $T$, the spectrum must be associated with the invertibility of the operator
\begin{equation*}
Q_s[T]:=T^2-2s_0T+|s|^2,\qquad\text{with }\dom(Q_s[T])=\dom(T^2).
\end{equation*}
This leads us to the definitions of the \textit{$S$-resolvent set} and the \textit{$S$-spectrum}
\begin{equation*}
\rho_S(T):=\big\{s\in\mathbb{R}^{n+1}\;\big|\;Q_s[T]^{-1}\in\mathcal{B}(V)\big\}\qquad\text{and}\qquad\sigma_S(T):=\mathbb{R}^{n+1}\setminus\rho_S(T),
\end{equation*}
where $\mathcal{B}(V)$ is the space of all bounded operators from $V$ into itself (see \cite{CGK,ColomboSabadiniStruppa2011} for more details). Motivated by \eqref{Eq_Resolvent_sum}, we define for every $s\in\rho_S(T)$ the \textit{left} and the \textit{right $S$-resolvent operator}
\begin{equation}\label{Eq_SL_SR}
S_L^{-1}(s,T):=Q_s[T]^{-1}\overline{s}-TQ_s[T]^{-1}\qquad\text{and}\qquad S_R^{-1}(s,T):=(\overline{s}-T)Q_s[T]^{-1}.
\end{equation}
which is then be used in the definition of the $S$-functional calculus
\begin{equation}\label{Eq_S_functional_calculus}
f(T):=\frac{1}{2\pi}\int_{\partial U\cap\mathbb{C}_J}S_L^{-1}(s,T)ds_Jf(s)\quad\text{and}\quad f(T):=\frac{1}{2\pi}\int_{\partial U\cap\mathbb{C}_J}f(s)ds_JS_R^{-1}(s,T),
\end{equation}
for left resp. right slice holomorphic functions $f$. This is the Clifford analogue of the Riesz-Dunford functional calculus. Moreover, we remark that the spectral theorem has recently been extended from quaternionic operators in \cite{ACK} to fully Clifford operators in \cite{ColKim}. \medskip

The $H^\infty$-functional calculus, as an extension of the $S$-functional calculus \eqref{Eq_S_functional_calculus}, has been first explored in works such as \cite{ACQS2016,CGdiffusion2018}, while the recent paper \cite{MS24} introduced the $H^\infty$-functional calculus for unbounded bisectorial operators within a Clifford module over the algebra $\mathbb{R}_n$. This work leverages the universality property of the $S$-functional calculus, as explained in \cite{ADVCGKS}, which demonstrates its applicability to fully Clifford operators. The idea of the $H^\infty$-functional calculus for left slice holomorphic functions $f$, is to find a regularizer function $e$, such that we can define
\begin{equation*}
f(T):=e(T)^{-1}(ef)(T),
\end{equation*}
where both $e(T)$ and $(ef)(T)$ are interpreted in the sense of the $S$-functional calculus \eqref{Eq_S_functional_calculus}. Thanks to \cite[Theorem 7.2.6]{FJBOOK}, this definition is independent of the regularizer $e$. Since right slice hyperholomorphic functions $f$ maintain slice hyperholomorphicity under multiplication with intrinsic functions $e$ from the right, a natural attempt of defining $f(T)$ would be as $(fe)(T)e(T)^{-1}$. However, it was already noted in \cite[Remark 7.2.2]{FJBOOK}, that this definition is not independent of the regularizer function $e$. For left linear operators, the situation is of course the opposite. \medskip

In Section~\ref{sec_Right_Hinfty}, we provide a solution to this longstanding open problem, by defining the $H^\infty$-functional calculus for right linear, bisectorial, injective operators $T$ and right slice holomorphic functions $f$ with at most polynomial growth, as the closure
\begin{equation}\label{Eq_Hinfty}
f(T):=\overline{(fe)(T)e(T)^{-1}}.
\end{equation}
Since we a priori do not know that $(fe)(T)e(T)^{-1}$ is closable, the closure in \eqref{Eq_Hinfty} is understood in the sense of multivalued operators, see Definition~\ref{defi_Multivalued_operators}. The independence of \eqref{Eq_Hinfty} from the regularizer $e$ is proven in Theorem~\ref{thm_Hinfty_independence} and crucially depends on the fact that for bisectorial operators the Banach module $V$ decomposes into $V=\ker(T)+\overline{\ran}(T)$, see Theorem~\ref{thm_Domain_dense}. The fact that the right $H^\infty$-functional calculus \eqref{Eq_Hinfty} coincides with the already existing left $H^\infty$-functional calculus for intrinsic functions, and with the right $\omega$-functional calculus for decaying functions, is proven in Theorem~\ref{thm_fT_as_closure} and Theorem~\ref{thm_Right_omega_as_closure}. \medskip

Quaternionic and Clifford operators play an important role across various fields of mathematics and physics. For example, they are appear in quaternionic quantum mechanics (see S. Adler's seminal work \cite{adler}), in vector analysis, differential geometry, and hypercomplex analysis.

\medskip
\textit{Vector analysis.} The gradient operator with nonconstant coefficients in $n$ dimensions represents various physical laws, such as Fourier's law for heat propagation and Fick's law for mass transfer diffusion. This can be expressed as
\begin{equation*}
T=\sum_{i=1}^ne_ia_i(x)\partial_{x_i},\qquad x\in\Omega,
\end{equation*}
and is associated with various boundary conditions, as discussed in \cite{CMS24}. Here, $e_1,\dots,e_n$ represent the imaginary units of the Clifford algebra $\mathbb{R}_n$, and the coefficients $a_1,\dots,a_n$ are assumed to belong to $C^1(\overline{\Omega})$ and satisfy appropriate bounds. \medskip

\textit{Differential geometry.} Let $(M,g)$ be a Riemannian manifold of dimension $n$, and $U$ a coordinate neighbourhood of $M$. In this neighbourhood, we can find an orthonormal frame of vector fields $E_1,\dots,E_n$, such that the Dirac operator on $(U,g)$ can be expressed as the $\mathcal{C}^\infty(U,\mathcal{H})$ differential operator
\begin{equation}\label{Eq_Dirac_manifold}
\mathcal{D}=\sum_{i=1}^ne_i\nabla_{E_i}^\tau,
\end{equation}
where $\nabla_{E_i}^\tau$ denotes the covariant derivative with respect to the vector field $E_i$. For more details see the book \cite{DiracHarm}. As a special case, \cite{DIRACHYPSPHE} considers the Dirac operator in hyperbolic and spherical spaces, where \eqref{Eq_Dirac_manifold} takes the explicit forms
\begin{equation*}
\mathcal{D}_H=x_n\sum_{i=1}^ne_i\partial_{x_i}-\frac{n-1}{2}e_n\qquad\text{and}\qquad\mathcal{D}_S=(1+|x|^2)\sum_{i=1}^ne_i\partial_{x_i}-nx,
\end{equation*}
for $(x_1,...,x_{n-1},x_n)\in\mathbb{R}^{n-1}\times\mathbb{R}^+$ and $(x_1,...,x_n)\in\mathbb{R}^n$, respectively. \medskip

\textit{Hypercomplex analysis.} The well-known Dirac operator and its conjugate
\begin{equation*}
D=\partial_{x_0}+\sum_{i=1}^ne_i\partial_{x_i}\qquad\text{and}\qquad\overline{D}=\partial_{x_0}-\sum_{i=1}^ne_i\partial_{x_i},
\end{equation*}
are widely investigated in \cite{DSS} as well as in \cite{DiracHarm}. Slice hyperholomorphic functions can also be viewed as functions in the kernel of the global operator
\begin{equation*}
G=|\Im(x)|^2\partial_{x_0}+\Im(x)\sum_{i=1}^nx_i\partial_{x_i},
\end{equation*}
introduced in \cite{6Global}, where $\Im(x)=x_1e_1+\dots+x_ne_n$. For integers $\alpha,\beta,m$, the operators of the Dirac fine structure on the $S$-spectrum are defined as
\begin{equation*}
T_{\alpha,m}=D^\alpha(D\overline{D})^m\qquad\text{and}\qquad\widetilde{T}_{\beta,m}=\overline{D}^\beta(D\overline{D})^m.
\end{equation*}
For more information, see the concluding remarks in the last section of this paper, where we discuss the fine structures on the $S$-spectrum that consist of the function spaces and the related functional calculi.

\section{Preliminaries on Clifford Algebras and Clifford modules}

Clifford algebras are associative algebras that generalize complex numbers and quaternions. They provide a mathematical framework to study operator theory in a noncommutative context. The fact that $n$-tuples, or more generally $2^n$-tuples, of noncommuting operators are embedded into a Clifford algebra, produces the benefit that for Clifford operators, the notion of spectrum turns out to be very clear and explicit, even if it differs from the classical notion in complex operator theory. Precisely, the real {\it Clifford algebra} $\mathbb{R}_n$ over $n$ {\it imaginary units} $e_1,\dots,e_n$, which satisfy the relations
\begin{equation*}
e_i^2=-1\qquad\text{and}\qquad e_ie_j=-e_je_i,\qquad i\neq j\in\{1,\dots,n\},
\end{equation*}
is given by
\begin{equation*}
\mathbb{R}_n:=\Big\{\sum\nolimits_{A\in\mathcal{A}}s_Ae_A\;\Big|\;s_A\in\mathbb{R},\,A\in\mathcal{A}\Big\}.
\end{equation*}
Here we used the index set
\begin{equation*}
\mathcal{A}:=\big\{(i_1,\dots,i_r)\;\big|\;r\in\{0,\dots,n\},\,1\leq i_1<\dots<i_r\leq n\big\},
\end{equation*}
and the {\it basis vectors} $e_A:=e_{i_1}\dots e_{i_r}$. Note that for $A=\emptyset$ the empty product of imaginary units is the real number $e_\emptyset=1$. Furthermore, we define for every Clifford number $s\in\mathbb{R}_n$ its {\it conjugate} and its {\it absolute value}
\begin{equation}\label{Eq_Conjugate_Norm}
\overline{s}:=\sum\nolimits_{A\in\mathcal{A}}(-1)^{\frac{|A|(|A|+1)}{2}}s_Ae_A\qquad\text{and}\qquad|s|^2:=\sum\nolimits_{A\in\mathcal{A}}s_A^2.
\end{equation}
An important subset of the Clifford numbers are the so called {\it paravectors}
\begin{equation*}
\mathbb{R}^{n+1}:=\big\{s_0+s_1e_1+\dots+s_ne_n\;\big|\;s_0,s_1,\dots,s_n\in\mathbb{R}\big\}.
\end{equation*}
For any paravector $s\in\mathbb{R}^{n+1}$, we define the {\it imaginary part}
\begin{equation*}
\Im(s):=s_1e_1+\dots+s_ne_n,
\end{equation*}
and the conjugate and the modulus in \eqref{Eq_Conjugate_Norm} reduce to
\begin{equation*}
\overline{s}=s_0-s_1e_1-\dots-s_ne_n\qquad\text{and}\qquad|s|^2=s_0^2+s_1^2+\dots+s_n^2.
\end{equation*}
The sphere of purely imaginary paravectors with modulus $1$, is defined by
\begin{equation*}
\mathbb{S}:=\big\{J\in\mathbb{R}^{n+1}\;\big|\;J_0=0,\,|J|=1\big\}.
\end{equation*}
Any element $J\in\mathbb{S}$ satisfies $J^2=-1$ and hence the corresponding hyperplane
\begin{equation*}
\mathbb{C}_J:=\big\{x+Jy\;\big|\;x,y\in\mathbb{R}\big\}
\end{equation*}
is an isomorphic copy of the complex numbers. Moreover, for every paravector $s\in\mathbb{R}^{n+1}$ we consider the corresponding {\it $(n-1)$--sphere}
\begin{equation*}
[s]:=\big\{s_0+J|\Im(s)|\;\big|\;J\in\mathbb{S}\big\}.
\end{equation*}
A subset $U\subseteq\mathbb{R}^{n+1}$ is called {\it axially symmetric}, if $[s]\subseteq U$ for every $s\in U$. \medskip

In the hypercomplex context, several notions of holomorphic functions exist, each with distinct series expansions. For example, the classical theory related to the Dirac operator leads to monogenic function, which admit expansions in terms of Fueter polynomials. In this article we consider slice hyperholomorphic functions, which allow power series expansions, and are particularly suitable for use with Clifford operators.

\begin{defi}[Slice hyperholomorphic functions]
Let $U\subseteq\mathbb{R}^{n+1}$ be open, axially symmetric and consider
\begin{equation*}
\mathcal{U}:=\big\{(x,y)\in\mathbb{R}^2\;\big|\;x+\mathbb{S}y\subseteq U\big\}.
\end{equation*}
A function $f:U\rightarrow\mathbb{R}_n$ is called {\it left} (resp. {\it right}) {\it slice hyperholomorphic}, if there exist continuously differentiable functions $f_0,f_1:\mathcal{U}\rightarrow\mathbb{R}_n$, such that for every $(x,y)\in\mathcal{U}$:

\begin{enumerate}
\item[i)] The function $f$ admits for every $J\in\mathbb{S}$ the representation
\begin{equation*}
f(x+Jy)=f_0(x,y)+Jf_1(x,y),\quad\Big(\text{resp.}\;f(x+Jy)=f_0(x,y)+f_1(x,y)J\Big).
\end{equation*}

\item[ii)] The functions $f_0,f_1$ satisfy the {\it compatibility conditions}
\begin{equation*}
f_0(x,-y)=f_0(x,y)\qquad\text{and}\qquad f_1(x,-y)=-f_1(x,y).
\end{equation*}

\item[iii)] The functions $f_0,f_1$ satisfy the {\it Cauchy-Riemann equations}
\begin{equation*}
\frac{\partial}{\partial x}f_0(x,y)=\frac{\partial}{\partial y}f_1(x,y)\qquad\text{and}\qquad\frac{\partial}{\partial y}f_0(x,y)=-\frac{\partial}{\partial x}f_1(x,y).
\end{equation*}
\end{enumerate}

The class of left (resp. right) slice hyperholomorphic functions on $U$ is denoted by $\mathcal{SH}_L(U)$ (resp. $\mathcal{SH}_R(U)$). In the special case that $f_0$ and $f_1$ are real valued, we call $f$ {\it intrinsic} and the space of intrinsic functions is denoted by $\mathcal{N}(U)$.
\end{defi}

Next, we turn our attention to Clifford modules over $\mathbb{R}_n$. For a real Banach space $V_\mathbb{R}$ with norm $\Vert\cdot\Vert_\mathbb{R}$, we define the corresponding \textit{Banach module}
\begin{equation}\label{Eq_Banach_module}
V:=\Big\{\sum\nolimits_{A\in\mathcal{A}}v_A\otimes e_A\;\Big|\;v_A\in V_\mathbb{R},\,A\in\mathcal{A}\Big\},
\end{equation}
and equip it with the \textit{norm}
\begin{equation*}
\Vert v\Vert^2:=\sum\nolimits_{A\in\mathcal{A}}\Vert v_A\Vert_\mathbb{R}^2,\qquad v\in V.
\end{equation*}
For any vector $v=\sum_{A\in\mathcal{A}}v_A\otimes e_A\in V$ and any Clifford number $s=\sum_{B\in\mathcal{A}}s_Be_B\in\mathbb{R}_n$, we establish the left and the right scalar multiplication
\begin{align*}
sv:=&\sum\nolimits_{A,B\in\mathcal{A}}(s_Bv_A)\otimes(e_Be_A),\qquad\textit{(left multiplication)} \\
vs:=&\sum\nolimits_{A,B\in\mathcal{A}}(v_As_B)\otimes(e_Ae_B).\hspace{0.84cm}\textit{(right multiplication)}
\end{align*}
We recall the well known properties of these products
\begin{align*}
\Vert sv\Vert&\leq 2^{\frac{n}{2}}|s|\Vert v\Vert\qquad\text{and}\qquad\Vert vs\Vert\leq 2^{\frac{n}{2}}|s|\Vert v\Vert,\qquad\text{if }s\in\mathbb{R}_n, \\
\Vert sv\Vert&=\Vert vs\Vert=|s|\Vert v\Vert,\hspace{4.83cm}\text{if }s\in\mathbb{R}^{n+1}.
\end{align*}
We will call $V$ a \textit{$\mathbb{R}$-reflexive Banach module}, if $V$ is reflexive with respect to its $\mathbb{R}$-linear dual space. For our particular space $V$ in \eqref{Eq_Banach_module} this means that $V$ is $\mathbb{R}$-reflexive, if and only if $V_\mathbb{R}$ is reflexive. \medskip

Next, let us denote the set of \textit{bounded right linear operators}
\begin{equation*}
\mathcal{B}(V):=\big\{T:V\rightarrow V\text{ right linear}\;\big|\;\dom(T)=V,\,T\text{ is bounded}\big\},
\end{equation*}
as well as the space of \textit{closed right linear operators}
\begin{equation*}
\mathcal{K}(V):=\big\{T:V\rightarrow V\text{ right linear}\;\big|\;\dom(T)\subseteq V\text{ is right linear},\,T\text{ is closed}\big\}.
\end{equation*}
If an unbounded operator $T$ is not closed, one may ask the question if there exists a closed extension. If this is the case we call the operator \textit{closable}, and the smallest closed extension $\overline{T}$ its \textit{closure}. \medskip

In the case an operator is not closable, which happens to be the case for the right $H^\infty$-calculus in \eqref{Eq_Hinfty}, the closure in some sense exceeds the notion of operators. We will now introduce the concept of \textit{multivalued right linear operators} (also known as right linear relations) to overcome this problem. The idea is based on the fact that the graph
\begin{equation*}
\text{graph}(T)=\big\{(v,Tv)\;\big|\;v\in\dom(T)\big\},
\end{equation*}
of a right linear operator $T$, is a right linear subspace of $V\times V$.

\begin{defi}[Multivalued operators]\label{defi_Multivalued_operators}
Any right linear subspace $\mathcal{T}\subseteq V\times V$ is called a \textit{right linear multivalued operator}.
\end{defi}

Let now $\mathcal{T},\mathcal{S}\subseteq V\times V$ be two right linear multivalued operators, and $s\in\mathbb{R}_n$. Then the sum and product of multivalued operators is defined as
\begin{align}
\mathcal{T}+\mathcal{S}:=&\big\{(v,w_1+w_2)\;\big|\;(v,w_1)\in\mathcal{T},\,(v,w_2)\in\mathcal{S}\big\}, \notag \\
s\mathcal{T}:=&\big\{(v,sw)\;\big|\;(v,w)\in\mathcal{T}\big\}, \label{Eq_Sum_product} \\
\mathcal{T}\circ\mathcal{S}:=&\big\{(v,u)\in V\times V\;\big|\;\exists w\in V\text{ such that }(v,w)\in\mathcal{S}\text{ and }(w,u)\in\mathcal{T}\big\}. \notag
\end{align}
We can also equip any multivalued operator naturally with a domain
\begin{equation}\label{Eq_Domain}
\dom(\mathcal{T}):=\big\{v\in V\;|\;\exists w\in V\text{ such that}(v,w)\in\mathcal{T}\big\}.
\end{equation}
The great advantage of treating multivalued operators in our case is, that every right linear multivalued operator $\mathcal{T}$ admits a closure
\begin{equation}\label{Eq_Closure}
\overline{\mathcal{T}}:=\big\{(v,w)\in V\times V\;\big|\;\exists(v_n,w_n)_n\in\mathcal{T}\text{ such that }(v,w)=\lim_{n\rightarrow\infty}(v_n,w_n)\big\},
\end{equation}
which is again a right linear multivalued operator. Note, that in the case that $\mathcal{T}$ and $\mathcal{S}$ are graphs of right linear operator $T$ and $S$, the definitions \eqref{Eq_Sum_product} coincide with the classical sum and product of operators. Also the closure \eqref{Eq_Closure} of multivalued operators coincides with the classical closure of operators in the case that $T$ is closable. A more detailed introduction into multivalued operators can for example be found in \cite[Section 1]{BHS20}.

\section{Properties of bisectorial operators}

We collect in this section important properties of bisectorial Clifford operators. We state and prove some consequences of the bisectoriality estimate of the left $S$-resolvent in Lemma~\ref{lem_Resolvent_estimates}. For reflexive Banach modules we show in Theorem~\ref{thm_Domain_dense} that bisectorial operators are always densely defined. Finally, in Theorem~\ref{thm_dom_ran_density} we study also powers of a bisectorial operator. \medskip

In order to introduce bisectorial operators we define for every $\omega\in(0,\frac{\pi}{2})$ the \textit{double sector} \medskip

\begin{minipage}{0.34\textwidth}
\begin{center}
\begin{tikzpicture}
\fill[black!30] (0,0)--(1.63,0.76) arc (25:-25:1.8)--(0,0)--(-1.63,-0.76) arc (205:155:1.8);
\draw (-1.63,0.76)--(1.63,-0.76);
\draw (-1.63,-0.76)--(1.63,0.76);
\draw[->] (-2.1,0)--(2.1,0);
\draw[->] (0,-0.7)--(0,0.7);
\draw (1,0) arc (0:25:1) (0.8,-0.07) node[anchor=south] {\small{$\omega$}};
\draw (-1.3,0) node[anchor=south] {\small{$D_\omega$}};
\end{tikzpicture}
\end{center}
\end{minipage}
\begin{minipage}{0.65\textwidth}
\begin{equation}\label{Eq_Domega}
D_\omega:=\big\{re^{J\phi}\;\big|\;r>0,\,J\in\mathbb{S},\,\phi\in I_\omega\big\},
\end{equation}
using the union of intervals
\begin{equation*}
I_\omega:=(-\omega,\omega)\cup(\pi-\omega,\pi+\omega).
\end{equation*}
\end{minipage}

\begin{defi}[Bisectorial operators]
An operator $T\in\mathcal{K}(V)$ is called \textit{bisectorial of angle} $\omega\in(0,\frac{\pi}{2})$, if its $S$-spectrum is contained in the closed double sector
\begin{equation*}
\sigma_S(T)\subseteq\overline{D_\omega},
\end{equation*}
and for every $\varphi\in(\omega,\frac{\pi}{2})$ there exists $C_\varphi\geq 0$, such that the left $S$-resolvent \eqref{Eq_SL_SR} satisfies
\begin{equation}\label{Eq_SL_estimate}
\Vert S_L^{-1}(s,T)\Vert\leq\frac{C_\varphi}{|s|},\qquad s\in\mathbb{R}^{n+1}\setminus(D_\varphi\cup\{0\}).
\end{equation}
\end{defi}

We will now show that the estimate \eqref{Eq_SL_estimate} on the left $S$-resolvent implies similar estimates on the right $S$-resolvent $S_R^{-1}(s,T)$ and also on the pseudo resolvent operator $Q_s[T]^{-1}$.

\begin{lem}\label{lem_Resolvent_estimates}
Let $T\in\mathcal{K}(V)$ be bisectorial of angle $\omega\in(0,\frac{\pi}{2})$. Then for every $\varphi\in(\omega,\frac{\pi}{2})$, and with the corresponding constant $C_\varphi$ from \eqref{Eq_SL_estimate}, there holds the norm estimates

\begin{enumerate}
\item[i)] $\Vert S_R^{-1}(s,T)\Vert\leq\frac{2C_\varphi}{|s|}$,\hspace{2.7cm} $s\in\mathbb{R}^{n+1}\setminus(D_\varphi\cup\{0\})$; \medskip

\item[ii)] $\Vert Q_s[T]^{-1}\Vert\leq\frac{2C_\varphi^2}{|s|^2}$,\hspace{2.94cm} $s\in\mathbb{R}^{n+1}\setminus(D_\varphi\cup\{0\})$; \medskip

\item[iii)] $\Vert TQ_s[T]^{-1}\Vert\leq\frac{2C_\varphi^2+C_\varphi}{|s|}$,\hspace{2.03cm} $s\in\mathbb{R}^{n+1}\setminus(D_\varphi\cup\{0\})$, \medskip

\item[iv)] $\Vert T^2Q_s[T]^{-1}\Vert\leq 1+2C_\varphi+2C_\varphi^2$,\qquad $s\in\mathbb{R}^{n+1}\setminus(D_\varphi\cup\{0\})$.
\end{enumerate}
\end{lem}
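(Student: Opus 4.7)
The plan is to derive all four bounds from the given left $S$-resolvent estimate by means of algebraic identities linking $Q_s[T]^{-1}$, $S_L^{-1}(s,T)$, $S_R^{-1}(s,T)$, and the operators $TQ_s[T]^{-1}$ and $T^2Q_s[T]^{-1}$. The natural order in which to carry out the proof is (ii) first, then (iii), then (i) and (iv).

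The central algebraic input I would establish is the product identity
\begin{equation*}
Q_s[T]^{-1} \;=\; S_L^{-1}(s,T)\,S_L^{-1}(\overline{s},T),
\end{equation*}
valid for every $s\in\rho_S(T)$; note that $\overline{s}\in\rho_S(T)$ as well, since the double sector $\overline{D_\varphi}$ is axially symmetric in $\mathbb{R}^{n+1}$. To verify it, one expands both factors from the definition \eqref{Eq_SL_SR} and uses three basic facts: $Q_{\overline{s}}[T]=Q_s[T]$ (since $s_0$ and $|s|$ are conjugation invariant); $T$ commutes with $Q_s[T]^{-1}$ because $Q_s[T]$ is a real polynomial in $T$; and $s+\overline{s}=2s_0$, $s\overline{s}=|s|^2\in\mathbb{R}$. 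After expansion the mixed terms $-TQ_s[T]^{-2}v\cdot(s+\overline{s})$ and $Q_s[T]^{-2}v\cdot s\overline{s}$ combine with $T^2Q_s[T]^{-2}v$ into $(T^2-2s_0T+|s|^2)Q_s[T]^{-2}v=Q_s[T]^{-1}v$. Taking norms and applying \eqref{Eq_SL_estimate} to both factors gives (ii).

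The remaining three estimates then follow by elementary rearrangements. Solving the definition $S_L^{-1}(s,T)=Q_s[T]^{-1}\overline{s}-TQ_s[T]^{-1}$ for $TQ_s[T]^{-1}$ and noting that right multiplication by the paravector $\overline{s}$ is isometric of norm $|s|$, I combine (ii) with \eqref{Eq_SL_estimate} to obtain (iii). For (i), I write $S_R^{-1}(s,T)=\overline{s}Q_s[T]^{-1}-TQ_s[T]^{-1}$; here $\overline{s}$ enters as \emph{left} multiplication (again isometric of norm $|s|$ on a paravector), so the first summand is controlled by (ii) and the second by (iii). Finally, for (iv) the rearrangement
\begin{equation*}
T^2Q_s[T]^{-1} \;=\; I + 2s_0\,TQ_s[T]^{-1} - |s|^2\,Q_s[T]^{-1},
\end{equation*}
which is just $Q_s[T]=T^2-2s_0T+|s|^2$ multiplied on the right by $Q_s[T]^{-1}$, reduces the problem to (iii) and (ii) together with $|s_0|\leq|s|$.

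The main delicate point throughout is the non-commutativity of Clifford multiplication: $Q_s[T]^{-1}$ commutes with $T$ and, by right linearity, with right multiplication by any Clifford number, but it need not commute with left multiplication by a non-real paravector. This is precisely why $S_L^{-1}(s,T)$ and $S_R^{-1}(s,T)$ are genuinely distinct operators and why (i) is not an automatic corollary of \eqref{Eq_SL_estimate}. Keeping careful track of which multiplications act from the left and which from the right, both in the product identity for $Q_s[T]^{-1}$ and in the rearrangements used for (iii) and (i), is what makes the argument go through.
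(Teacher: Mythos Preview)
Your factorization $Q_s[T]^{-1}=S_L^{-1}(s,T)\,S_L^{-1}(\overline{s},T)$ is correct and is in fact cleaner than the paper's argument for (ii): the paper squares the identities $S_L^{-1}(s,T)\pm S_L^{-1}(\overline{s},T)=2(s_0-T)Q_s[T]^{-1}$ resp.\ $-2Q_s[T]^{-1}\Im(s)$, recombines, and lands on $2C_\varphi^2/|s|^2$, whereas your product bound gives $C_\varphi^2/|s|^2$ directly. Part (iii) is then handled identically in both proofs.

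The gap is in (i). Estimating $S_R^{-1}(s,T)=\overline{s}\,Q_s[T]^{-1}-TQ_s[T]^{-1}$ through (ii) and (iii) produces at best $(2C_\varphi^2+C_\varphi)/|s|$, which is \emph{quadratic} in $C_\varphi$ and does not imply the stated linear bound $2C_\varphi/|s|$ (unless $C_\varphi\leq\tfrac12$, which is nowhere assumed). The paper obtains the linear constant by a direct identity: with $s\in\mathbb{C}_J$ one checks
\[
S_R^{-1}(s,T)=\tfrac{1}{2}\big(S_L^{-1}(s,T)+S_L^{-1}(\overline{s},T)\big)-\tfrac{J}{2}\big(S_L^{-1}(s,T)-S_L^{-1}(\overline{s},T)\big)J,
\]
so that \eqref{Eq_SL_estimate} applied four times gives $2C_\varphi/|s|$ immediately. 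A milder version of the same issue occurs in (iv): your grouping $I+2s_0\,TQ_s[T]^{-1}-|s|^2Q_s[T]^{-1}$ yields $1+2C_\varphi+3C_\varphi^2$ even with your sharpened (ii) and (iii), not the stated $1+2C_\varphi+2C_\varphi^2$; the paper instead writes $T^2Q_s[T]^{-1}=1-S_L^{-1}(s,T)s+TQ_s[T]^{-1}\overline{s}$ and applies \eqref{Eq_SL_estimate} once more directly. If all you need downstream is a bound of the correct order in $|s|$ your argument suffices, but it does not establish the constants exactly as the lemma records them.
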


\begin{proof}
i)\;\;Let us choose $J\in\mathbb{S}$, such that $s\in\mathbb{C}_J$. Then it is straight forward to check, by plugging in the definitions \eqref{Eq_SL_SR} of the $S$-resolvent operators, that
\begin{subequations}
\begin{align}
S_R^{-1}(s,T)+S_R^{-1}(\overline{s},T)&=2(s_0-T)Q_s[T]^{-1}=S_L^{-1}(s,T)+S_L^{-1}(\overline{s},T), \label{Eq_Resolvent_estimates_1} \\
S_R^{-1}(s,T)-S_R^{-1}(\overline{s},T)&=-2\Im(s)Q_s[T]^{-1}=-J\big(S_L^{-1}(s,T)-S_L^{-1}(\overline{s},T)\big)J. \label{Eq_Resolvent_estimates_2}
\end{align}
\end{subequations}
By adding these two equations, we can express the right $S$-resolvent in terms of the left $S$-resolvent in the way
\begin{equation*}
S_R^{-1}(s,T)=\frac{1}{2}\big(S_L^{-1}(s,T)+S_L^{-1}(\overline{s},T)\big)-\frac{J}{2}\big(S_L^{-1}(s,T)-S_L^{-1}(\overline{s},T)\big)J.
\end{equation*}
Using the given estimate \eqref{Eq_SL_estimate} of the left $S$-resolvent, we can then estimate
\begin{equation*}
\Vert S_R^{-1}(s,T)\Vert\leq\frac{1}{2}\Big(\frac{C_\varphi}{|s|}+\frac{C_\varphi}{|\overline{s}|}\Big)+\frac{1}{2}\Big(\frac{C_\varphi}{|s|}+\frac{C_\varphi}{|\overline{s}|}\Big)=\frac{2C_\varphi}{|s|}.
\end{equation*}
ii)\;\;From the identity \eqref{Eq_Resolvent_estimates_1}, we get
\begin{equation}\label{Eq_Resolvent_estimates_3}
\big(S_L^{-1}(s,T)+S_L^{-1}(\overline{s},T)\big)^2=4(s_0-T)^2Q_s[T]^{-2},
\end{equation}
and from \eqref{Eq_Resolvent_estimates_2}, we obtain
\begin{equation}\label{Eq_Resolvent_estimates_4}
\big(S_L^{-1}(s,T)-S_L^{-1}(\overline{s},T)\big)J\big(S_L^{-1}(s,T)-S_L^{-1}(\overline{s},T)\big)J=-4\Im(s)^2Q_s[T]^{-2}.
\end{equation}
Adding now the equations \eqref{Eq_Resolvent_estimates_3} and \eqref{Eq_Resolvent_estimates_4} allows us to write $Q_s[T]^{-1}$ in terms of the left $S$-resolvent in the form
\begin{align*}
\big(S_L^{-1}(s,T)+S_L^{-1}(\overline{s},T)\big)^2&+\big(S_L^{-1}(s,T)-S_L^{-1}(\overline{s},T)\big)J\big(S_L^{-1}(s,T)-S_L^{-1}(\overline{s},T)\big)J \\
&=4(s_0-T)^2Q_s[T]^{-2}-4\Im(s)^2Q_s[T]^{-2} \\
&=4(T^2-2s_0T+|s|^2)Q_s[T]^{-2}=4Q_s[T]^{-1}.
\end{align*}
Consequently, we can estimate
\begin{equation*}
\Vert Q_s[T]^{-1}\Vert\leq\frac{1}{4}\Big(\frac{C_\varphi}{|s|}+\frac{C_\varphi}{|s|}\Big)^2+\frac{1}{4}\Big(\frac{C_\varphi}{|s|}+\frac{C_\varphi}{|s|}\Big)\Big(\frac{C_\varphi}{|s|}+\frac{C_\varphi}{|s|}\Big)=\frac{2C_\varphi^2}{|s|^2}.
\end{equation*}
iii)\;\;Rewriting the definition \eqref{Eq_SL_SR} as $TQ_s[T]^{-1}=Q_s[T]^{-1}\overline{s}-S_L^{-1}(s,T)$, we can estimate
\begin{equation*}
\Vert TQ_s[T]^{-1}\Vert\leq\Vert Q_s[T]^{-1}\overline{s}\Vert+\Vert S_L^{-1}(s,T)\Vert\leq\frac{2C_\varphi^2}{|s|}+\frac{C_\varphi}{|s|}=\frac{2C_\varphi^2+C_\varphi}{|s|}.
\end{equation*}
iv)\;\;Finally, since we can write $T^2Q_s[T]^{-1}=1-S_L^{-1}(s,T)s+TQ_s[T]^{-1}\overline{s}$, it follows immediately from \eqref{Eq_SL_estimate} and iii), that
\begin{equation*}
\Vert T^2Q_s[T]^{-1}\Vert\leq 1+|s|\Vert S_L^{-1}(s,T)\Vert+|s|\Vert TQ_s[T]^{-1}\Vert\leq 1+2C_\varphi+2C_\varphi^2. \qedhere
\end{equation*}
\end{proof}

The following theorem will be crucial in Section~\ref{sec_Right_Hinfty} for the well definedness of the right $H^\infty$-functional calculus. It states that bisectorial operators are on the one hand densely defined, but more importantly that the Banach module $V$ decomposes into the kernel and the closure of the range of $T$.

\begin{thm}\label{thm_Domain_dense}
Let $V$ be a $\mathbb{R}$-reflexive Banach module and
$T\in\mathcal{K}(V)$ bisectorial of angle $\omega\in(0,\frac{\pi}{2})$. Then

\begin{enumerate}
\item[i)] $V=\overline{\dom}(T)$; \medskip

\item[ii)] $V=\ker(T)+\overline{\ran}(T)$\qquad and \qquad $\ker(T)\cap\overline{\ran}(T)=\emptyset$.
\end{enumerate}
\end{thm}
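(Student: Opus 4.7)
The plan is to study the uniformly bounded family $A_t:=t^2Q_{tJ}[T]^{-1}=t^2(T^2+t^2)^{-1}$, for fixed $J\in\mathbb{S}$ and $t>0$, whose bound $\|A_t\|\leq 2C_\varphi^2$ comes from Lemma~\ref{lem_Resolvent_estimates}(ii). The structural features I will exploit are that $\ran(A_t)\subseteq\dom(T^2)\subseteq\dom(T)$ and that $I-A_t=T^2(T^2+t^2)^{-1}=T\cdot[T(T^2+t^2)^{-1}]$, with $T(T^2+t^2)^{-1}v\in\dom(T)$ for every $v\in V$, so the range of $I-A_t$ sits inside $\ran(T)$. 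The overall strategy is to use $\mathbb{R}$-reflexivity to extract weakly convergent subsequences of $(A_{t_n}v)_n$ for appropriate $t_n\to 0^+$ or $t_n\to\infty$, and to use weak closedness of the graph of $T$ (a norm-closed linear, hence convex, subspace of $V\times V$) to transfer those limits through $T$.

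I will prove (ii) first. Given $v\in V$ and $t_n\to 0^+$, reflexivity yields a weakly convergent subsequence $A_{t_{n_k}}v\rightharpoonup v_0$. Lemma~\ref{lem_Resolvent_estimates}(iii) gives $\|TA_t\|=t^2\|TQ_{tJ}[T]^{-1}\|\leq t(2C_\varphi^2+C_\varphi)\to 0$, so $TA_{t_{n_k}}v\to 0$ in norm; weak closedness of the graph then forces $v_0\in\ker(T)$. Simultaneously $v-A_{t_{n_k}}v=T[T(T^2+t_{n_k}^2)^{-1}v]\in\ran(T)$, so the weak limit $v-v_0$ lies in the weak closure of the convex set $\ran(T)$, which by Mazur's theorem equals $\overline{\ran}(T)$. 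This yields $V=\ker(T)+\overline{\ran}(T)$. For the trivial intersection, $w\in\ker(T)$ forces $T^2w=0$ and hence $A_tw=w$ for every $t$, while $A_tw\to 0$ as $t\to 0^+$ for $w\in\overline{\ran}(T)$: this is shown first on $\ran(T)$ via the commutation $(T^2+t^2)^{-1}Tu=T(T^2+t^2)^{-1}u$ for $u\in\dom(T)$ (justified by the identity $T^2(T^2+t^2)^{-1}u=u-t^2(T^2+t^2)^{-1}u\in\dom(T)$, which places $(T^2+t^2)^{-1}u$ in $\dom(T^3)$) together with the bound of Lemma~\ref{lem_Resolvent_estimates}(iii), and then extended to $\overline{\ran}(T)$ by uniform boundedness of $A_t$. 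Hence any $w\in\ker(T)\cap\overline{\ran}(T)$ satisfies $w=\lim_{t\to 0^+}A_tw=0$.

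For (i), I will rerun the weak compactness argument but with $t_n\to\infty$. The factorization $v-A_tv=T[T(T^2+t^2)^{-1}v]$ and the bound $\|T(T^2+t^2)^{-1}\|\leq(2C_\varphi^2+C_\varphi)/t$ show that $T(T^2+t_n^2)^{-1}v\to 0$ in norm. Extracting $A_{t_{n_k}}v\rightharpoonup w$ by reflexivity, weak closedness of the graph applied to the pairs $\bigl(T(T^2+t_{n_k}^2)^{-1}v,\,v-A_{t_{n_k}}v\bigr)$, converging weakly to $(0,v-w)$, forces $v-w=T\cdot 0=0$ and hence $w=v$. Since each $A_{t_{n_k}}v\in\dom(T^2)\subseteq\dom(T)$, Mazur's theorem places $v$ in the norm closure $\overline{\dom}(T)$, yielding (i). The main obstacle throughout is the careful domain-bookkeeping required to make the manipulations of $T$, $T^2$ and $(T^2+t^2)^{-1}$ legitimate on the specific vectors involved — in particular verifying the commutation $(T^2+t^2)^{-1}T=T(T^2+t^2)^{-1}$ on $\dom(T)$ and the factorization $I-A_t=T\cdot[T(T^2+t^2)^{-1}]$ for arbitrary $v\in V$ — after which the weak-compactness plus closed-graph mechanism transfers essentially verbatim from the classical bisectorial theory to the present noncommutative Clifford setting.
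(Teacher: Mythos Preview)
Your proof is correct and follows essentially the same mechanism as the paper: uniform resolvent bounds plus $\mathbb{R}$-reflexivity to extract weak limits, weak closedness of the graph of $T$ to identify them, and Mazur's theorem to pass from weak to norm closure. The only differences are cosmetic---you work with the simpler family $A_t=t^2Q_{tJ}[T]^{-1}$ along the imaginary axis instead of the paper's $v_s=S_L^{-1}(s,T)sv$, and you handle the trivial intersection $\ker(T)\cap\overline{\ran}(T)=\{0\}$ directly (via $A_tw=w$ on $\ker(T)$ versus $A_tw\to 0$ on $\overline{\ran}(T)$) whereas the paper forward-references the limit in Lemma~\ref{lem_Properties_dom_ran}(ii), which is exactly the statement you prove inline.
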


\begin{proof}
i)\;\;Let $v\in V$ and fix any $\varphi\in(\omega,\frac{\pi}{2})$. Then consider the family of vectors
\begin{equation*}
v_s:=S_L^{-1}(s,T)sv,\qquad s\in\mathbb{R}^{n+1}\setminus(D_\varphi\cup\{0\}).
\end{equation*}
Due to \eqref{Eq_SL_estimate}, these vectors admit the uniform upper bound
\begin{equation*}
\Vert v_s\Vert=\Vert S_L^{-1}(s,T)sv\Vert\leq\frac{C_\varphi}{|s|}|s|\Vert v\Vert=C_\varphi\Vert v\Vert,\qquad s\in\mathbb{R}^{n+1}\setminus(D_\varphi\cup\{0\}).
\end{equation*}
Since $V$ is $\mathbb{R}$-reflexive, there exists $(s_n)_n\in\mathbb{R}^{n+1}\setminus(D_\varphi\cup\{0\})$ and some $w\in V$, such that
\begin{equation}\label{Eq_ker_ran_decomposition_1}
\lim\limits_{n\rightarrow\infty}|s_n|=\infty\qquad\text{and}\qquad\lim\limits_{n\rightarrow\infty}v_{s_n}=w,\quad\text{weakly in }V.
\end{equation}
Note, that the weak convergence is understood with respect to the dual space of $\mathbb{R}$-linear functionals. From the second limit in \eqref{Eq_ker_ran_decomposition_1}, and the left $S$-resolvent identity
\begin{equation}\label{Eq_ker_ran_decomposition_6}
TS_L^{-1}(s_n,T)=S_L^{-1}(s_n,T)s_n-1,
\end{equation}
which follows immediately from its definition \eqref{Eq_SL_SR}, it follows that
\begin{equation}\label{Eq_ker_ran_decomposition_2}
\lim\limits_{n\rightarrow\infty}TS_L^{-1}(s_n,T)v=\lim\limits_{n\rightarrow\infty}S_L^{-1}(s_n,T)s_nv-v=w-v,\quad\text{weakly in }V.
\end{equation}
Moreover, from the first limit in \eqref{Eq_ker_ran_decomposition_1} and the estimate \eqref{Eq_SL_estimate}, we also obtain the convergence
\begin{equation}\label{Eq_ker_ran_decomposition_3}
\lim\limits_{n\rightarrow\infty}\Vert S_L^{-1}(s_n,T)v\Vert\leq\lim\limits_{n\rightarrow\infty}\frac{C_\varphi}{|s_n|}\Vert v\Vert=0.
\end{equation}
Since $T$ is a closed operator, it is also weakly closed, which means that from \eqref{Eq_ker_ran_decomposition_2} and \eqref{Eq_ker_ran_decomposition_3} one concludes $w-v=T0=0$. Hence, the weak convergence \eqref{Eq_ker_ran_decomposition_1} turns into the weak convergence $\lim_{n\rightarrow\infty}v_{s_n}=v$, i.e. $v$ is weakly approximated by elements $v_{s_n}\in\dom(T)$. Hence $v$ is contained in the weak closure of $\dom(T)$, which is the same as being in the closure $v\in\overline{\dom}(T)$. \medskip

ii)\;\; We take the same family $v_s$ of vectors from \eqref{Eq_ker_ran_decomposition_3}, and analogously to \eqref{Eq_ker_ran_decomposition_1} we choose this time a sequence $(s_n)_n\in\mathbb{R}^{n+1}\setminus(D_\varphi\cup\{0\})$ and some $w\in V$, such that
\begin{equation}\label{Eq_ker_ran_decomposition_5}
\lim\limits_{n\rightarrow\infty}|s_n|=0\qquad\text{and}\qquad\lim\limits_{n\rightarrow\infty}v_{s_n}=w,\quad\text{weakly in }V.
\end{equation}
Using once more the identity \eqref{Eq_ker_ran_decomposition_6} as well as the estimate \eqref{Eq_SL_estimate}, there also converges
\begin{align}
\lim\limits_{n\rightarrow\infty}\Vert Tv_{s_n}\Vert&=\lim\limits_{n\rightarrow\infty}\Vert TS_L^{-1}(s_n,T)s_nv\Vert=\lim\limits_{n\rightarrow\infty}\Vert(S_L^{-1}(s_n,T)s_n-1)s_nv\Vert \notag \\
&\leq\lim\limits_{n\rightarrow\infty}\big(\Vert S_L^{-1}(s_n,T)\Vert|s_n|+1\big)|s_n|\Vert v\Vert\leq(C_\varphi+1)\lim\limits_{n\rightarrow\infty}|s_n|\Vert v\Vert=0.\label{Eq_ker_ran_decomposition_4}
\end{align}
As above, $T$ is closed, hence weakly closed, and we conclude from the two limits \eqref{Eq_ker_ran_decomposition_5} and \eqref{Eq_ker_ran_decomposition_4}, that $w\in\dom(T)$ and $Tw=0$, i.e. $w\in\ker(T)$. Moreover, with the $S$-resolvent identity \eqref{Eq_ker_ran_decomposition_6}, there converges
\begin{equation*}
\lim\limits_{n\rightarrow\infty}TS_L^{-1}(s_n,T)v=\lim\limits_{n\rightarrow\infty}S_L^{-1}(s_n,T)s_nv-v=w-v,\quad\text{weakly in }V.
\end{equation*}
This shows that $w-v$ is contained in the weak closure of $\ran(T)$, which is the same as $w-v\in\overline{\ran}(T)$. Altogether we have decomposed $v$ into
\begin{equation*}
v=w+(v-w)\in\ker(T)+\overline{\ran}(T).
\end{equation*}
In order to show that the sum is direct, let $v\in\ker(T)\cap\overline{\ran}(T)$. Then, from the first limit in the upcoming Lemma~\ref{lem_Properties_dom_ran}~ii), we obtain
\begin{equation*}
v=\lim_{\substack{s\rightarrow 0 \\ s\in\mathbb{R}^{n+1}\setminus D_\varphi}}(T-\overline{s})TQ_s[T]^{-1}v=\lim_{\substack{s\rightarrow 0 \\ s\in\mathbb{R}^{n+1}\setminus D_\varphi}}(T-\overline{s})Q_s[T]^{-1}0=0,
\end{equation*}
where we used that $TQ_s[T]^{-1}v=Q_s[T]^{-1}Tv$ commutes since $v\in\ker(T)\subseteq\dom(T)$.
\end{proof}

\begin{lem}\label{lem_Properties_dom_ran}
Let $V$ be a $\mathbb{R}$-reflexive Banach module and
$T\in\mathcal{K}(V)$ bisectorial of angle $\omega\in(0,\frac{\pi}{2})$. Then for every fixed $\varphi\in(\omega,\frac{\pi}{2})$ there exist the following limits: \medskip

\begin{enumerate}
\item[i)] $\lim\limits_{\substack{s\rightarrow\infty \\ s\in\mathbb{R}^{n+1}\setminus D_\varphi}}T^2Q_s[T]^{-1}v=0$,\hspace{0.75cm} $v\in V$;

\item[ii)] $\lim\limits_{\substack{s\rightarrow 0 \\ s\in\mathbb{R}^{n+1}\setminus D_\varphi}}|s|^2Q_s[T]^{-1}v=0$,\qquad $v\in\overline{\ran}(T)$.
\end{enumerate}
\end{lem}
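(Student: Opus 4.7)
The strategy for both limits is the same: reduce to a dense subspace where a commutation identity allows us to trade one factor of $|s|$ for a factor of $T$ applied to the input vector, and then extend the pointwise limit via uniform operator bounds already furnished by Lemma~\ref{lem_Resolvent_estimates}. The underlying observation is that $Q_s[T]^{-1}$ and $T$ commute on $\dom(T)$: if $v\in\dom(T)$ and $w:=Q_s[T]^{-1}v\in\dom(T^2)$, then rewriting $v=T^2w-2s_0Tw+|s|^2w$ shows $T^2w\in\dom(T)$, hence $w\in\dom(T^3)$, so the polynomial $Q_s[T]$ commutes with $T$ on $w$ and one deduces $TQ_s[T]^{-1}v=Q_s[T]^{-1}Tv$.

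For i), applying the commutation one level higher gives $T^2Q_s[T]^{-1}v=TQ_s[T]^{-1}Tv$ for every $v\in\dom(T)$, and Lemma~\ref{lem_Resolvent_estimates}~iii) then yields
\[
\Vert T^2Q_s[T]^{-1}v\Vert\leq\frac{2C_\varphi^2+C_\varphi}{|s|}\Vert Tv\Vert\longrightarrow0,\qquad|s|\rightarrow\infty.
\]
Since $\dom(T)$ is dense in $V$ by Theorem~\ref{thm_Domain_dense}~i) (whose proof uses only Lemma~\ref{lem_Resolvent_estimates} and therefore does not depend on the present lemma), and since $\Vert T^2Q_s[T]^{-1}\Vert\leq 1+2C_\varphi+2C_\varphi^2$ uniformly in $s\in\mathbb{R}^{n+1}\setminus(D_\varphi\cup\{0\})$ by Lemma~\ref{lem_Resolvent_estimates}~iv), a standard $\varepsilon/3$ approximation extends the limit to all $v\in V$.

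For ii), I would pick any $v\in\ran(T)$, write $v=Tw$ and use the same commutation to get $|s|^2Q_s[T]^{-1}v=|s|^2TQ_s[T]^{-1}w$; Lemma~\ref{lem_Resolvent_estimates}~iii) then gives
\[
\Vert|s|^2Q_s[T]^{-1}v\Vert\leq|s|(2C_\varphi^2+C_\varphi)\Vert w\Vert\longrightarrow0,\qquad|s|\rightarrow 0.
\]
The uniform bound $\Vert|s|^2Q_s[T]^{-1}\Vert\leq 2C_\varphi^2$ from Lemma~\ref{lem_Resolvent_estimates}~ii) then allows an identical $\varepsilon/3$ argument to extend the limit from $\ran(T)$ to its closure $\overline{\ran}(T)$.

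The main technical point is not the two estimates themselves, which are routine once the commutation identity is in hand, but the verification that $Q_s[T]^{-1}$ maps $\dom(T)$ into $\dom(T^3)$ as sketched above. One must also take care that the appeal to Theorem~\ref{thm_Domain_dense}~i) does not create a circular dependency: that part of the theorem is established independently of Lemma~\ref{lem_Properties_dom_ran}, whereas the direct-sum part ii) of the theorem (which does rely on the present lemma) is not needed here.
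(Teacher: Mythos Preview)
Your proposal is correct and follows essentially the same approach as the paper: both establish the limit on the dense subspace $\dom(T)$ (resp.\ $\ran(T)$) via the commutation $TQ_s[T]^{-1}=Q_s[T]^{-1}T$ and Lemma~\ref{lem_Resolvent_estimates}~iii), and then extend to $V$ (resp.\ $\overline{\ran}(T)$) using the uniform bounds of Lemma~\ref{lem_Resolvent_estimates}~iv) and~ii). The paper writes this as a single chained inequality with two successive limits rather than your separate $\varepsilon/3$ formulation, but the content is identical, and your remarks on the commutation domain and on avoiding circularity with Theorem~\ref{thm_Domain_dense}~i) are accurate.
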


\begin{proof}
i)\;\;Let $v\in V$. By Theorem~\ref{thm_Domain_dense}~i), there exists a sequence $(v_n)_n\in\dom(T)$ with
\begin{equation}\label{Eq_Properties_dom_ran_1}
\lim\limits_{n\rightarrow\infty}\Vert v-v_n\Vert=0.
\end{equation}
Since there commutes $TQ_s[T]^{-1}v_n=Q_s[T]^{-1}Tv_n$ because of $v_n\in\dom(T)$, we can use the inequalities in Lemma~\ref{lem_Resolvent_estimates}~iii)~\&~iv), to estimate
\begin{align*}
\Vert T^2Q_s[T]^{-1}v\Vert&\leq\Vert T^2Q_s[T]^{-1}(v-v_n)\Vert+\Vert TQ_s[T]^{-1}Tv_n\Vert \\
&\leq(1+2C_\varphi+2C_\varphi^2)\Vert v-v_n\Vert+\frac{2C_\varphi^2+C_\varphi}{|s|}\Vert Tv_n\Vert.
\end{align*}
Sending $s\rightarrow\infty$, the second term of the right hand side vanishes, and the estimate reduces to
\begin{equation*}
\limsup_{\substack{s\rightarrow\infty \\ s\in\mathbb{R}^{n+1}\setminus D_\varphi}}\Vert T^2Q_s[T]^{-1}v\Vert\leq(1+2C_\varphi+2C_\varphi^2)\Vert v-v_n\Vert.
\end{equation*}
However, since this is true for every $n\in\mathbb{N}$, we can also take the limit $n\rightarrow\infty$ of the right hand side and with \eqref{Eq_Properties_dom_ran_1} we end up with the stated convergence
\begin{equation*}
\lim_{\substack{s\rightarrow\infty \\ s\in\mathbb{R}^{n+1}\setminus D_\varphi}}\Vert T^2Q_s[T]^{-1}v\Vert=0.
\end{equation*}
ii)\;\;Let $v\in\overline{\ran}(T)$, i.e. there exists a sequence $(u_n)_n\in\dom(T)$, such that
\begin{equation}\label{Eq_Properties_dom_ran_2}
\lim\limits_{n\rightarrow\infty}\Vert v-Tu_n\Vert=0.
\end{equation}
We can now use the inequalities in Lemma~\ref{lem_Resolvent_estimates}~ii)~\&~iii), to estimate
\begin{align*}
\Vert|s|^2Q_s[T]^{-1}v\Vert&\leq\Vert|s|^2Q_s[T]^{-1}(v-Tu_n)\Vert+\Vert|s|^2TQ_s[T]^{-1}u_n\Vert \\
&\leq 2C_\varphi^2\Vert v-Tu_n\Vert+(2C_\varphi^2+C_\varphi)|s|\Vert u_n\Vert.
\end{align*}
Sending $s\rightarrow 0$, the second term on the right hand side vanishes and the estimate reduces to
\begin{equation*}
\limsup\limits_{\substack{s\rightarrow 0 \\ s\in\mathbb{R}^{n+1}\setminus D_\varphi}}\Vert|s|^2Q_s[T]^{-1}v\Vert\leq 2C_\varphi^2\Vert v-Tu_n\Vert.
\end{equation*}
However, since this is true for every $n\in\mathbb{N}$, we can also take the limit $n\rightarrow\infty$ of the right hand side, and with \eqref{Eq_Properties_dom_ran_2} we end up with the stated convergence
\begin{equation*}
\lim\limits_{\substack{s\rightarrow 0 \\ s\in\mathbb{R}^{n+1}\setminus D_\varphi}}\Vert|s|^2Q_s[T]^{-1}v\Vert=0. \qedhere
\end{equation*}
\end{proof}

The next theorem describes properties of the powers of the bisectorial operator $T$.

\begin{thm}\label{thm_dom_ran_density}
Let $V$ be a $\mathbb{R}$-reflexive Banach module and $T\in\mathcal{K}(V)$ bisectorial of angle $\omega\in(0,\frac{\pi}{2})$. Then for every $m\in\mathbb{N}$, there is
\begin{equation}\label{Eq_Tm_ran_dense}
\overline{\dom(T^m)\cap\ran(T^m)}=\overline{\ran}(T).
\end{equation}
If $T$ is also injective, then even
\begin{equation}\label{Eq_Tm_dense}
\overline{\dom(T^m)\cap\ran(T^m)}=V.
\end{equation}
\end{thm}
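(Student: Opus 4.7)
The plan is to establish the nontrivial inclusion $\overline{\ran}(T) \subseteq \overline{\dom(T^m) \cap \ran(T^m)}$, the reverse being immediate from $\ran(T^m) \subseteq \ran(T)$. For $v \in \overline{\ran}(T)$ I would approximate $v$ by a \emph{double resolvent smoothing}: one factor places the approximant in $\dom(T^m)$, the other exhibits it as belonging to $\ran(T^m)$. The essential simplification is to evaluate both smoothings at purely imaginary spectral parameters $s = Jy$, for which $Q_s[T] = T^2 + y^2$, so that all resolvent factors commute with each other and with $T^2$.

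Concretely, fix $J \in \mathbb{S}$ and set, for $y>0$,
\[
A_y := y^2(T^2+y^2)^{-1}, \qquad B_y := T^2(T^2+y^2)^{-1}.
\]
These satisfy $A_y + B_y = I$, commute pairwise at different values of $y$, and (by Lemma~\ref{lem_Resolvent_estimates}~ii) and iv), applied to any $\varphi \in (\omega,\pi/2)$, since $\arg(Jy) = \pi/2 \notin I_\varphi$) are uniformly norm-bounded in $y$. Lemma~\ref{lem_Properties_dom_ran}~i) yields $B_y v \to 0$ as $y \to \infty$ for every $v \in V$, hence $A_y v \to v$; Lemma~\ref{lem_Properties_dom_ran}~ii) yields $A_y v \to 0$ as $y \to 0$ for every $v \in \overline{\ran}(T)$, hence $B_y v \to v$. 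The telescoping $A_y^m - I = \sum_{k=0}^{m-1} A_y^k (A_y - I)$ together with uniform boundedness upgrades these to $A_y^m v \to v$ on $V$ and $B_y^m v \to v$ on $\overline{\ran}(T)$.

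For $v \in \overline{\ran}(T)$ and $y, y' > 0$, I would then introduce the approximants
\[
w_{y,y'} := A_y^m B_{y'}^m v = y^{2m}\, T^{2m}\, (T^2+y^2)^{-m}(T^2+y'^2)^{-m} v.
\]
Since $(T^2+y^2)^{-m}(T^2+y'^2)^{-m} v \in \dom(T^{4m})$, applying $T^{2m}$ places $w_{y,y'}$ in $\dom(T^{2m}) \subseteq \dom(T^m)$; rewriting $w_{y,y'} = T^m\bigl[T^m y^{2m}(T^2+y^2)^{-m}(T^2+y'^2)^{-m} v\bigr]$ with the bracketed vector itself in $\dom(T^m)$ exhibits $w_{y,y'}$ as an element of $\ran(T^m)$. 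For the convergence I fix $y$ and send $y'\to 0$: since $B_{y'}^m v \to v$ on $\overline{\ran}(T)$ and $A_y^m$ is bounded, $w_{y,y'}\to A_y^m v$; then $y\to\infty$ gives $A_y^m v \to v$. A diagonal choice $y_k\to\infty$ with $y_k'$ so small that $\|w_{y_k,y_k'} - A_{y_k}^m v\|<1/k$ yields $w_{y_k,y_k'}\to v$, which proves \eqref{Eq_Tm_ran_dense}.

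The identity \eqref{Eq_Tm_dense} then follows from \eqref{Eq_Tm_ran_dense} via Theorem~\ref{thm_Domain_dense}~ii), because injectivity of $T$ collapses $V = \ker(T)+\overline{\ran}(T)$ to $V = \overline{\ran}(T)$. The principal obstacle is engineering a formula that lies simultaneously in $\dom(T^m)$ and in $\ran(T^m)$ while still converging to $v$; the crux is that the imaginary choice of spectral parameter makes every resolvent factor a function of $T^2$, so the two smoothings compose freely and the two independent parameters $y, y'$ are independently responsible for the two limits.
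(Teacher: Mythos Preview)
Your proof is correct and is essentially the paper's own argument: the paper uses exactly your operators $A_y$ and $B_y$, but couples the two parameters by setting $r_n[T]:=A_nB_{1/n}$ and proving $r_n[T]^m v\to v$ on $\overline{\ran}(T)$ directly, so that a single limit $n\to\infty$ replaces your two-parameter diagonal extraction. Apart from this minor streamlining the ingredients (Lemma~\ref{lem_Resolvent_estimates} for uniform bounds, Lemma~\ref{lem_Properties_dom_ran} for the two limits, and the observation $\ran(r_n[T]^m)\subseteq\dom(T^{2m})\cap\ran(T^{2m})$) are identical.
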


\begin{proof}
The inclusion $\text{\grqq}\subseteq\text{\grqq}$ in \eqref{Eq_Tm_ran_dense} is trivial. For the inclusion $\text{\grqq}\supseteq\text{\grqq}$, let us consider for every $n\in\mathbb{N}$ the operator
\begin{equation}\label{Eq_dom_ran_density_2}
r_n[T]:=n^2T^2(T^2+n^2)^{-1}\Big(T^2+\frac{1}{n^2}\Big)^{-1}.
\end{equation}
Note that since we can write $T^2+n^2=Q_{Jn}[T]$ and $T^2+\frac{1}{n^2}=Q_{\frac{J}{n}}[T]$, for some arbitrary $J\in\mathbb{S}\subseteq\rho_S(T)$, the operator $r_n[T]$ is well defined and bounded. \medskip

In the \textit{first step} we want to show that for every $m\in\mathbb{N}$, there converges
\begin{equation}\label{Eq_dom_ran_density_5}
\lim\limits_{n\rightarrow\infty}r_n[T]^mu=u,\qquad u\in\overline{\ran}(T).
\end{equation}
For the induction start $m=1$, we rewrite the operator \eqref{Eq_dom_ran_density_2} as
\begin{align*}
r_n[T]&=1-T^2(T^2+n^2)^{-1}-(T^2+n^2)^{-1}\Big(T^2+\frac{1}{n^2}\Big)^{-1} \\
&=1-T^2Q_{Jn}[T]^{-1}-Q_{Jn}[T]^{-1}Q_{\frac{J}{n}}[T]^{-1}.
\end{align*}
Using the bounds of $Q_{Jn}[T]^{-1}$ in Lemma~\ref{lem_Resolvent_estimates}~ii), we can estimate
\begin{align*}
\Vert r_n[T]u-u\Vert&\leq\Vert T^2Q_{Jn}[T]^{-1}u\Vert+\Vert Q_{Jn}[T]^{-1}\Vert\Vert Q_{\frac{J}{n}}[T]^{-1}u\Vert \\
&\leq\Vert T^2Q_{Jn}[T]^{-1}u\Vert+\frac{2C_\varphi^2}{n^2}\Vert Q_{\frac{J}{n}}[T]^{-1}\Vert,\qquad u\in\overline{\ran}(T).
\end{align*}
Using the limits in Lemma~\ref{lem_Properties_dom_ran}~i)~\&~ii), we then obtain
\begin{equation*}
\lim\limits_{n\rightarrow\infty}\Vert r_n[T]u-u\Vert\leq\lim\limits_{n\rightarrow\infty}\Vert T^2Q_{Jn}[T]^{-1}u\Vert+2C_\varphi^2\lim\limits_{n\rightarrow\infty}\Big\Vert\frac{1}{n}Q_{\frac{J}{n}}[T]^{-1}u\Big\Vert=0.
\end{equation*}
For the induction step $m\rightarrow m+1$, we estimate
\begin{align*}
\Vert r_n[T]^{m+1}u-u\Vert&\leq\Vert r_n[T]\Vert\Vert r_n[T]^mu-u\Vert+\Vert r_n[T]u-u\Vert \\
&\leq 2C_\varphi^2(1+2C_\varphi+2C_\varphi^2)\Vert r_n[T]^mu-u\Vert+\Vert r_n[T]u-u\Vert.
\end{align*}
where in the second line we used estimate
\begin{equation*}
\Vert r_n[T]\Vert=n^2\big\Vert Q_{Jn}[T]^{-1}T^2Q_{\frac{J}{n}}[T]^{-1}\big\Vert\leq 2C_\varphi^2(1+2C_\varphi+2C_\varphi^2),
\end{equation*}
which follows from Lemma~\ref{lem_Resolvent_estimates}~ii)~\&~iv). Hence, using the induction assumption $m$ as well as the induction start $m=1$, we conclude the converges
\begin{equation*}
\lim\limits_{n\rightarrow\infty}\Vert r_n[T]^{m+1}u-u\Vert\leq 2C_\varphi^2(1+2C_\varphi+2C_\varphi^2)\lim\limits_{n\rightarrow\infty}\Vert r_n[T]^mu-u\Vert+\lim\limits_{n\rightarrow\infty}\Vert r_n[T]u-u\Vert=0.
\end{equation*}
Hence we have proven \eqref{Eq_dom_ran_density_5}. Next, since we can write the $m$-th power of $r_n[T]$ as
\begin{equation*}
r_n[T]^m=n^mT^{2m}(T^2+n)^{-m}\Big(T^2+\frac{1}{n}\Big)^{-m},
\end{equation*}
it is obvious that
\begin{equation}\label{Eq_dom_ran_density_1}
\ran(r_n[T]^m)=\dom(T^{2m})\cap\ran(T^{2m}).
\end{equation}
Hence, for every $v\in\overline{\ran}(T)$ there is $v_n:=r_n[T]^mv\in\dom(T^{2m})\cap\ran(T^{2m})$, and we have already proven in \eqref{Eq_dom_ran_density_5} that there converges
\begin{equation*}
v=\lim\limits_{n\rightarrow\infty}v_n.
\end{equation*}
This proves that $v\in\overline{\dom(T^{2m})\cap\ran(T^{2m})}$, and we have verified \eqref{Eq_Tm_ran_dense} for all even powers of $T$. However, since the sets $\dom(T^m)\cap\ran(T^m)$ are getting smaller with increasing $m$, the equality \eqref{Eq_Tm_ran_dense} is then also true for every value $m\in\mathbb{N}$. \medskip

If $T$ is also injective, there is $\overline{\ran}(T)=V$ by Theorem~\ref{thm_Domain_dense}~ii), and \eqref{Eq_Tm_dense} follows from \eqref{Eq_Tm_ran_dense}.
\end{proof}

\section{The $H^\infty$-functional calculus right slice hyperholomorphic functions}\label{sec_Right_Hinfty}

This section contains the main results of this paper. We will start in Definition~\ref{defi_Omega_functional_calculus} by recalling the well established $\omega$-functional calculus for left and right holomorphic functions. We will do this in accordance with \cite[Definition 3.5]{MS24} and hence need the following spaces of slice hyperholomorphic functions.

\begin{defi}
For every $\theta\in(0,\frac{\pi}{2})$ let $D_\theta$ be the double sector from \eqref{Eq_Domega}. Then we define the function spaces
\begin{align*}
\text{i)}\;\;&\mathcal{SH}_L^0(D_\theta):=\bigg\{f\in\mathcal{SH}_L(D_\theta)\;\bigg|\;f\text{ is bounded},\;\int_0^\infty|f(re^{J\phi})|\frac{dr}{r}<\infty,\;J\in\mathbb{S},\phi\in I_\theta\bigg\}, \\
\text{ii)}\;\;&\mathcal{SH}_R^0(D_\theta):=\bigg\{f\in\mathcal{SH}_R(D_\theta)\;\bigg|\;f\text{ is bounded},\;\int_0^\infty|f(re^{J\phi})|\frac{dr}{r}<\infty,\;J\in\mathbb{S},\phi\in I_\theta\bigg\}, \\
\text{iii)}\;\;&\mathcal{N}^0(D_\theta):=\bigg\{f\in\mathcal{N}(D_\theta)\;\bigg|\;f\text{ is bounded},\;\int_0^\infty|f(re^{J\phi})|\frac{dr}{r}<\infty,\;J\in\mathbb{S},\phi\in I_\theta\bigg\}.
\end{align*}
\end{defi}

For those functions we then define the so called $\omega$-functional calculus.

\begin{defi}[$\omega$-functional calculus]\label{defi_Omega_functional_calculus}
Let $T\in\mathcal{K}(V)$ be bisectorial of angle $\omega\in(0,\frac{\pi}{2})$. Then for every $f\in\mathcal{SH}_L^0(D_\theta)$ (resp. $f\in\mathcal{SH}_R^0(D_\theta)$) we define the \textit{$\omega$-functional calculus}
\begin{align}
f(T):=&\frac{1}{2\pi}\int_{\partial D_\varphi\cap\mathbb{C}_J}S_L^{-1}(s,T)ds_Jf(s), \label{Eq_omega_functional_calculus} \\
\bigg(\text{resp.}\;\;f(T):=&\frac{1}{2\pi}\int_{\partial D_\varphi\cap\mathbb{C}_J}f(s)ds_JS_R^{-1}(s,T)\bigg), \label{Eq_Right_omega}
\end{align}
where $\varphi\in(\omega,\theta)$ and $J\in\mathbb{S}$ are arbitrary and the integral \eqref{Eq_omega_functional_calculus} is independent of the choice.
\end{defi}

As it is already worked out already in greater detail in \cite[Section 5]{MS24}, the $\omega$-functional calculus \eqref{Eq_omega_functional_calculus} can now be generalized to a larger class of left holomorphic functions. Since in this article we consider injective, sectorial operators $T$, we will define the $H^\infty$-functional calculus for the following natural class of polynomially growing functions, see also \cite[Proposition 5.2]{MS24}.

\begin{defi}\label{defi_SH_poly}
For every $\theta\in(0,\frac{\pi}{2})$ let $D_\theta$ be the double sector from \eqref{Eq_Domega}. Then we define the function spaces
\begin{align*}
\text{i)}\;\;&\mathcal{SH}_L^\poly(D_\theta):=\bigg\{f\in\mathcal{SH}_L(D_\theta)\;\bigg|\;\exists C,\alpha\geq 0: |f(s)|\leq C\Big(|s|^\alpha+\frac{1}{|s|^\alpha}\Big),\text{ for every }s\in D_\theta\bigg\}; \\
\text{ii)}\;\;&\mathcal{SH}_R^\poly(D_\theta):=\bigg\{f\in\mathcal{SH}_R(D_\theta)\;\bigg|\;\exists C,\alpha\geq 0: |f(s)|\leq C\Big(|s|^\alpha+\frac{1}{|s|^\alpha}\Big),\text{ for every }s\in D_\theta\bigg\}; \\
\text{iii)}\;\;&\mathcal{N}^\poly(D_\theta):=\bigg\{f\in\mathcal{N}(D_\theta)\;\bigg|\;\exists C,\alpha\geq 0: |f(s)|\leq C\Big(|s|^\alpha+\frac{1}{|s|^\alpha}\Big),\text{ for every }s\in D_\theta\bigg\}.
\end{align*}
\end{defi}

Using the following regularization procedure, the so called $H^\infty$-functional calculus generalizes the $\omega$-functional calculus at least for left slice hyperholomorphic functions $f\in\mathcal{SH}_L^\poly(D_\theta)$, see \cite[Definition 5.3]{MS24}.

\begin{defi}[Left $H^\infty$-functional calculus]\label{defi_Left_Hinfty}
Let $T\in\mathcal{K}(V)$ be sectorial of angle $\omega\in(0,\frac{\pi}{2})$ and $f\in\mathcal{SH}_L^\poly(D_\theta)$. then we define the \textit{left $H^\infty$-functional calculus}
\begin{equation}\label{Eq_Left_Hinfty}
f(T):=e(T)^{-1}(ef)(T),
\end{equation}
using the function $e(s)=\frac{s^m}{(1+s^2)^m}$, for some $m\in\mathbb{N}$ with $m>\alpha$ and $\alpha$ from Definition~\ref{defi_SH_poly}.
\end{defi}

The main purpose of this section is to extend this left $H^\infty$-functional calculus also to right slice hyperholomorphic functions $f\in\mathcal{SH}_R^\poly(D_\theta)$. Let us start with the following theorem, which gives a different representation of \eqref{Eq_Left_Hinfty} for intrinsic functions.

\begin{thm}[Reformulation of the intrinsic $H^\infty$-functional calculus]\label{thm_fT_as_closure}
Let $V$ be a $\mathbb{R}$-reflexive Banach module and $T\in\mathcal{K}(V)$ injective, bisectorial of angle $\omega\in(0,\frac{\pi}{2})$. Then for every $f\in\mathcal{N}^\poly(D_\theta)$, $\theta\in(\omega,\frac{\pi}{2})$, the operator $(fe)(T)e(T)^{-1}$ is closable, and the left $H^\infty$-functional calculus \eqref{Eq_Left_Hinfty} can be written as
\begin{equation}\label{Eq_fT_as_closure}
f(T)=\overline{(fe)(T)e(T)^{-1}}.
\end{equation}
Here $e(s)=\frac{s^m}{(1+s^2)^m}$, for some $m\in\mathbb{N}$ with $m>\alpha$ and $\alpha$ from Definition~\ref{defi_SH_poly}.
\end{thm}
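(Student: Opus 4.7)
My plan is to prove the two graph inclusions $(fe)(T)e(T)^{-1}\subseteq f(T)$ and $f(T)\subseteq\overline{(fe)(T)e(T)^{-1}}$. The first, combined with the closedness of $f(T)=e(T)^{-1}(ef)(T)$, will immediately yield closability of $(fe)(T)e(T)^{-1}$ and the inclusion $\overline{(fe)(T)e(T)^{-1}}\subseteq f(T)$; the second completes \eqref{Eq_fT_as_closure}. The engine throughout is the product rule for the $\omega$-functional calculus applied to the intrinsic factors $e$ and $ef=fe$, both of which lie in $\mathcal{N}^0(D_\theta)$ since $m>\alpha$. This gives $e(T)(ef)(T)=(e^2f)(T)=(ef)(T)e(T)$ and, more generally, commutativity of every pair of bounded operators arising from intrinsic slice hyperholomorphic functions of $T$.

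For the first inclusion I take $v\in\dom(e(T)^{-1})=\ran(e(T))$, write $v=e(T)u$, and use the product rule to observe that $(ef)(T)v=e(T)(ef)(T)u$ lies in $\ran(e(T))$. Thus $v\in\dom(f(T))$ and cancelling $e(T)^{-1}e(T)$ yields $f(T)v=(ef)(T)u=(fe)(T)u=(fe)(T)e(T)^{-1}v$. The closedness of $f(T)$ then follows from writing it as the composition of the bounded operator $(ef)(T)$ with the closed operator $e(T)^{-1}$; the latter is closed because $e(T)$ is bounded, and injective via the identification $e(T)=T^m(1+T^2)^{-m}$ together with the injectivity of $T$.

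For the reverse inclusion I regularize with the bounded operators $r_n[T]$ from \eqref{Eq_dom_ran_density_2}. The key algebraic observation is $\ran(e(T))=\dom(T^m)\cap\ran(T^m)$, obtained by writing $e(T)=T^m(1+T^2)^{-m}$ and using invertibility of $1+T^2=Q_J[T]$ for any $J\in\mathbb{S}$. For $v\in\dom(f(T))$ I set $v_n:=r_n[T]^m v$; then $v_n\in\dom(T^{2m})\cap\ran(T^{2m})\subseteq\ran(e(T))$, so $(fe)(T)e(T)^{-1}v_n=f(T)v_n$ by the first step. The convergence $v_n\to v$ follows from \eqref{Eq_dom_ran_density_5} combined with $V=\overline{\ran}(T)$, which holds by Theorem~\ref{thm_Domain_dense}~ii) and the injectivity of $T$. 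Since $r_n[T]^m$ commutes with every intrinsic $\omega$-calculus operator, it preserves $\ran(e(T))$ and commutes with $e(T)^{-1}$ on $\dom(e(T)^{-1})$, yielding
\[
f(T)v_n=e(T)^{-1}r_n[T]^m(ef)(T)v=r_n[T]^mf(T)v\longrightarrow f(T)v,
\]
where the last limit is \eqref{Eq_dom_ran_density_5} applied to $f(T)v\in V=\overline{\ran}(T)$. This places $(v,f(T)v)$ in the graph of $\overline{(fe)(T)e(T)^{-1}}$.

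The main obstacle I expect is the rigorous bookkeeping of the commutation between $r_n[T]^m$ and the unbounded $e(T)^{-1}$, together with the identification $\ran(e(T))=\dom(T^m)\cap\ran(T^m)$; both rely on careful applications of the product rule for the intrinsic $\omega$-functional calculus. The quantitative convergences themselves are supplied directly by Theorem~\ref{thm_dom_ran_density} and Theorem~\ref{thm_Domain_dense}.
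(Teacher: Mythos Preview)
Your proposal is correct and follows essentially the same approach as the paper: both prove the inclusion $(fe)(T)e(T)^{-1}\subseteq f(T)$ via commutativity of the intrinsic $\omega$-calculus operators $e(T)$ and $(ef)(T)$, deduce closability from closedness of $f(T)$, and then for the reverse inclusion approximate $v\in\dom(f(T))$ by $v_n=r_n[T]^m v$ using \eqref{Eq_dom_ran_density_5} together with $V=\overline{\ran}(T)$. The only cosmetic difference is that the paper invokes the abstract commutation $f(T)r_n[T]^m\supseteq r_n[T]^m f(T)$ from \cite[Corollary~5.11~i)]{MS24}, whereas you obtain the same identity by directly commuting $r_n[T]^m$ past $e(T)^{-1}$ on $\ran(e(T))$.
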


\begin{proof}
For the inclusion $\text{\grqq}\supseteq\text{\grqq}$, we use the commutativity property \cite[Corollary~3.18~iii)]{MS24} of $e(T)$ and $(ef)(T)$, to obtain the operator inclusion
\begin{align}
(fe)(T)e(T)^{-1}&=e(T)^{-1}e(T)(fe)(T)e(T)^{-1} \notag \\
&=e(T)^{-1}(ef)(T)e(T)e(T)^{-1}\subseteq e(T)^{-1}(ef)(T)=f(T). \label{Eq_fT_as_closure_4}
\end{align}
Since the left $H^\infty$-functional calculus $f(T)$ is a closed operator, see \cite[Lemma 5.5]{MS24}, we conclude that $(fe)(T)e(T)^{-1}$ is closable, and
\begin{equation*}
\overline{(fe)(T)e(T)^{-1}}\subseteq f(T).
\end{equation*}
For the inverse inclusion $\text{\grqq}\subseteq\text{\grqq}$, let us consider the operators $r_n(T)$ from \eqref{Eq_dom_ran_density_2}. For any $v\in\dom(f(T))$ and the fixed value $m$ in the statement of the theorem, we consider the sequence of vectors
\begin{equation*}
v_n:=r_n[T]^mv,\qquad n\in\mathbb{N}.
\end{equation*}
Since $\overline{\ran}(T)=V$ by Theorem~\ref{thm_Domain_dense}~ii) and the injectivity of $T$, it is already proven in \eqref{Eq_dom_ran_density_5} that there converges
\begin{equation}\label{Eq_fT_as_closure_3}
\lim\limits_{n\rightarrow\infty}v_n=\lim\limits_{n\rightarrow\infty}r_n[T]^mv=v.
\end{equation}
By \cite[Theorem 3.16]{MS24}, the $\omega$-functional calculus of the regularizer function $e(s)=\frac{s^m}{(1+s^2)^m}$ can be written as
\begin{equation*}
e(T)=T^m(1+T^2)^{-m}.
\end{equation*}
Consequently, its range is given by
\begin{equation*}
\ran(e(T))=\dom(T^m)\cap\ran(T^m).
\end{equation*}
However, we also know the range of $r_n[T]^m$ from \eqref{Eq_dom_ran_density_1}, which then clearly gives us
\begin{equation}\label{Eq_fT_as_closure_1}
v_n=r_n[T]^mv\in\ran(r_n[T]^m)\subseteq\ran(e(T))=\dom\big((fe)(T)e(T)^{-1}\big).
\end{equation}
Using once more the operator inclusion \eqref{Eq_fT_as_closure_4} as well as the commutation
\begin{equation*}
f(T)r_n[T]^m\supseteq r_n[T]^mf(T),
\end{equation*}
from \cite[Corollary~5.11~i)]{MS24}, we can rewrite
\begin{equation*}
(fe)(T)e(T)^{-1}v_n=f(T)v_n=f(T)r_n[T]^mv=r_n[T]^mf(T)v.
\end{equation*}
The convergence \eqref{Eq_dom_ran_density_5} used for the vector $u=f(T)v$, then gives the limit
\begin{equation}\label{Eq_fT_as_closure_5}
\lim\limits_{n\rightarrow\infty}(fe)(T)e(T)^{-1}v_n=\lim\limits_{n\rightarrow\infty}r_n[T]^mf(T)v=f(T)v.
\end{equation}
Finally, combining the limits \eqref{Eq_fT_as_closure_3} and \eqref{Eq_fT_as_closure_5}, shows that $v\in\dom\big(\overline{(fe)(T)e(T)^{-1}}\big)$ and
\begin{equation*}
\overline{(fe)(T)e(T)^{-1}}\,v=f(T)v. \qedhere
\end{equation*}
\end{proof}

\begin{thm}[Reformulation of the right $\omega$-functional calculus]\label{thm_Right_omega_as_closure}
Let $V$ be a reflexive Banach module and $T\in\mathcal{K}(V)$ injective, bisectorial of angle $\omega\in(0,\frac{\pi}{2})$. Then for every $f\in\mathcal{SH}_R^0(D_\theta)$, $\theta\in(\omega,\frac{\pi}{2})$, the operator $(fe)(T)e(T)^{-1}$ is closable, and the right $\omega$-functional calculus \eqref{Eq_Right_omega} can be written as
\begin{equation}\label{Eq_Right_omega_as_closure}
f(T)=\overline{(fe)(T)e(T)^{-1}}.
\end{equation}
Here $e(s)=\frac{s^m}{(1+s^2)^m}$, for some arbitrary $m\in\mathbb{N}$.
\end{thm}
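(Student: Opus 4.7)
The plan is to mirror the structure of Theorem~\ref{thm_fT_as_closure}, but the argument is in fact shorter because $f\in\mathcal{SH}_R^0(D_\theta)$ forces $f(T)$ to be a \emph{bounded} operator, defined directly via the convergent integral in \eqref{Eq_Right_omega} with no regularization step needed. In particular, once the reformulation has been reduced to an equality on a dense subspace, no delicate closure argument on $f(T)$ itself is required.

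First, I would establish the product rule
\begin{equation*}
(fe)(T)=f(T)e(T)
\end{equation*}
as an identity of bounded operators on $V$. Since $e(s)=s^m/(1+s^2)^m$ is intrinsic, the pointwise product satisfies $fe=ef\in\mathcal{SH}_R^0(D_\theta)$, so both $(fe)(T)$ and $f(T)$ are defined as bounded operators by \eqref{Eq_Right_omega}. The product rule is then the right-sided analogue of the identity used in the proof of Theorem~\ref{thm_fT_as_closure} (see \cite[Corollary~3.18]{MS24}); it can be verified directly from the integral representation by inserting the right $S$-resolvent equation, or by exploiting the fact that $e(T)=T^m(1+T^2)^{-m}$ is a rational function of $T$ and therefore commutes with every operator obtained from the right $\omega$-calculus.

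Second, using $e(T)=T^m(1+T^2)^{-m}$ from \cite[Theorem~3.16]{MS24}, the natural domain of our candidate operator is
\begin{equation*}
\dom\bigl((fe)(T)e(T)^{-1}\bigr)=\ran(e(T))=\dom(T^m)\cap\ran(T^m),
\end{equation*}
and by Theorem~\ref{thm_dom_ran_density}, applied with $T$ injective, this domain is dense in $V$. On any $v\in\ran(e(T))$ the product rule from the previous step collapses to
\begin{equation*}
(fe)(T)e(T)^{-1}v=f(T)e(T)e(T)^{-1}v=f(T)v,
\end{equation*}
so $(fe)(T)e(T)^{-1}$ coincides with the bounded operator $f(T)$ on a dense subspace of $V$.

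Finally, since $f(T)\in\mathcal{B}(V)$ is in particular closed and extends $(fe)(T)e(T)^{-1}$, the latter is closable, and any densely defined operator whose closure is contained in a bounded operator and which agrees with it on its domain must have that bounded operator as its closure; this yields $\overline{(fe)(T)e(T)^{-1}}=f(T)$. The main obstacle is verifying the right-side product rule $(fe)(T)=f(T)e(T)$ with an intrinsic regularizer $e$; once this identity is in hand, the remainder is a short density argument via Theorem~\ref{thm_dom_ran_density}.
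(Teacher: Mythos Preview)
Your proposal is correct and follows essentially the same approach as the paper: both use the product rule $(fe)(T)=f(T)e(T)$ to obtain the inclusion $(fe)(T)e(T)^{-1}\subseteq f(T)$ with $f(T)$ bounded, and then upgrade this to equality of the closure via density of $\ran(e(T))=\dom(T^m)\cap\ran(T^m)$. The only cosmetic difference is that the paper makes the density step explicit by writing down the approximating sequence $v_n=r_n[T]^m v$ from \eqref{Eq_dom_ran_density_2}, whereas you invoke Theorem~\ref{thm_dom_ran_density} and the general principle that a densely defined restriction of a bounded operator closes to that operator.
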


\begin{proof}
For the inclusion $\text{\grqq}\subseteq\text{\grqq}$ we note that due to the product rule $(fe)(T)=f(T)e(T)$ of the $\omega$-functional calculus, there is
\begin{equation}\label{Eq_Right_omega_as_closure_1}
(fe)(T)e(T)^{-1}=f(T)e(T)e(T)^{-1}\subseteq f(T).
\end{equation}
Since $f(T)$ is bounded and hence closed, there is $(fe)(T)e(T)^{-1}$ closable, and
\begin{equation*}
\overline{(fe)(T)e(T)^{-1}}\subseteq f(T).
\end{equation*}
For the inclusion $\text{\grqq}\supseteq\text{\grqq}$ let $v\in V$. With the operators $r_n[T]$ from \eqref{Eq_dom_ran_density_5} we then define the approximating sequence
\begin{equation*}
v_n:=r_n[T]^mv.
\end{equation*}
We then know from \eqref{Eq_fT_as_closure_1} and \eqref{Eq_dom_ran_density_5}, that
\begin{equation}\label{Eq_Right_omega_as_closure_2}
v_n\in\dom((fe)(T)e(T)^{-1})\qquad\text{and}\qquad\lim\limits_{n\rightarrow\infty}v_n=v.
\end{equation}
Using once more the already derived operator inclusion \eqref{Eq_Right_omega_as_closure_1}, as well as the fact that $f(T)$ is a bounded operator, there then also converges
\begin{equation}\label{Eq_Right_omega_as_closure_3}
\lim\limits_{n\rightarrow\infty}(fe)(T)e(T)^{-1}v_n=\lim\limits_{n\rightarrow\infty}f(T)v_n=f(T)v.
\end{equation}
The two limits \eqref{Eq_Right_omega_as_closure_2} and \eqref{Eq_Right_omega_as_closure_3} now show that $v\in\dom\big(\overline{(fe)(T)e(T)^{-1}}\big)$, which proves also the second inclusion.
\end{proof}

The representations \eqref{Eq_fT_as_closure} and \eqref{Eq_Right_omega_as_closure} will now serve as a motivation for the $H^\infty$-functional calculus of right slice hyperholomorphic functions. Note that in \eqref{Eq_fT_as_closure} and \eqref{Eq_Right_omega_as_closure} the respective right hand sides look formally the same as in the upcoming definition \eqref{Eq_Right_Hinfty}. However, for those special functions $f$ it turned out that that the operator $(fe)(T)e(T)^{-1}$ was closable and the closure was understood in the sense of operators. Since for general $f$ this is no longer the case, we have to interpret the closure in \eqref{Eq_Right_Hinfty} in the sense of multivalued operators, which consequently means that the right $H^\infty$-functional calculus $f(T)$ is not an operator but rather a multivalued operator in the sense of Definition \ref{defi_Multivalued_operators}.

\begin{defi}[Right $H^\infty$-functional calculus]\label{defi_Right_Hinfty}
Let $V$ be a $\mathbb{R}$-reflexive Banach module and
$T\in\mathcal{K}(V)$ injective, bisectorial of angle $\omega\in(0,\frac{\pi}{2})$. Then for every $f\in\mathcal{SH}_R^\poly(D_\theta)$, $\theta\in(\omega,\frac{\pi}{2})$ we define the \textit{right $H^\infty$-functional calculus} as
\begin{equation}\label{Eq_Right_Hinfty}
f(T):=\overline{(fe)(T)e(T)^{-1}},
\end{equation}
using the function $e(s)=\frac{s^m}{(1+s^2)^m}$, for some $m\in\mathbb{N}$ with $m>\alpha$ and $\alpha$ from Definition~\ref{defi_SH_poly}. Moreover, the closure is understood in the sense of multivalued operators \eqref{Eq_Closure}.
\end{defi}

\begin{thm}\label{thm_Hinfty_independence}
The right $H^\infty$-functional calculus in Definition~\ref{defi_Right_Hinfty} is well defined. In particular this means that \medskip

\begin{enumerate}
\item[i)] $f(T)$ is independent of the chosen integer $m>\alpha$; \medskip

\item[ii)] $f(T)$ coincides with the left $H^\infty$-functional calculus \eqref{Eq_Left_Hinfty} for $f\in\mathcal{N}^\poly(D_\theta)$; \medskip

\item[iii)] $f(T)$ coincides with the right $\omega$-functional calculus \eqref{Eq_Right_omega} for $f\in\mathcal{SH}_R^0(D_\theta)$.
\end{enumerate}
\end{thm}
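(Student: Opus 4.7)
The three assertions will be tackled in the natural order: (iii) and (ii) are essentially consequences of the reformulation results of the previous section, while (i) carries the real content.

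For (iii), any $f\in\mathcal{SH}_R^0(D_\theta)$ satisfies the polynomial growth bound of Definition~\ref{defi_SH_poly} with $\alpha=0$, so every $m\in\mathbb{N}$ is admissible in Definition~\ref{defi_Right_Hinfty}; Theorem~\ref{thm_Right_omega_as_closure} then asserts precisely that $(fe)(T)e(T)^{-1}$ is closable as an operator, with closure equal to the right $\omega$-functional calculus \eqref{Eq_Right_omega}. Since the multivalued closure of a closable operator coincides with its operator closure, the two calculi agree. For (ii), $f\in\mathcal{N}^\poly(D_\theta)$ is intrinsic, so $ef=fe$ as functions; moreover, the left and right $\omega$-functional calculi produce the same operator on intrinsic functions, so $(ef)(T)=(fe)(T)$, and Theorem~\ref{thm_fT_as_closure} directly yields $e(T)^{-1}(ef)(T)=\overline{(fe)(T)e(T)^{-1}}$, which is exactly the right $H^\infty$-calculus.

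For (i), fix $m_1,m_2\in\mathbb{N}$ with $\alpha<m_1\le m_2$, set $e_i(s):=s^{m_i}/(1+s^2)^{m_i}$ and $A_i:=(fe_i)(T)e_i(T)^{-1}$, and aim to prove $\overline{A_1}=\overline{A_2}$. For the inclusion $\overline{A_2}\subseteq\overline{A_1}$ I would argue algebraically: writing $e_2=e_1\widetilde e$ with $\widetilde e(s):=s^{m_2-m_1}/(1+s^2)^{m_2-m_1}$ intrinsic, and using the product rule of the right $\omega$-calculus with intrinsic multipliers on the right, every $v=e_2(T)y\in\dom(A_2)$ can be rewritten as $v=e_1(T)(\widetilde e(T)y)\in\ran(e_1(T))=\dom(A_1)$ with $A_1v=(fe_1)(T)\widetilde e(T)y=(fe_2)(T)y=A_2v$. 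Hence $A_2\subseteq A_1$ as graphs, and passing to closures gives $\overline{A_2}\subseteq\overline{A_1}$.

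For the reverse inclusion, given $(v,w)\in A_1$, I would consider the smoothed sequence $v_n:=r_n[T]^{m_2}v$ with $r_n[T]$ from \eqref{Eq_dom_ran_density_2}. Identity \eqref{Eq_dom_ran_density_1} gives $v_n\in\dom(T^{2m_2})\cap\ran(T^{2m_2})\subseteq\ran(e_2(T))=\dom(A_2)$; injectivity of $T$ together with Theorem~\ref{thm_Domain_dense}~ii) yields $\overline{\ran}(T)=V$, so \eqref{Eq_dom_ran_density_5} applied to both $v$ and $w$ gives $v_n\to v$ and $r_n[T]^{m_2}w\to w$. Intrinsicality of $r_n$ ensures that $r_n[T]^{m_2}$ commutes with $e_1(T)$, with $(fe_1)(T)$, and therefore also with $e_1(T)^{-1}$ on $\ran(e_1(T))$; consequently
\begin{equation*}
A_2v_n=A_1v_n=(fe_1)(T)\,r_n[T]^{m_2}\,e_1(T)^{-1}v=r_n[T]^{m_2}\,A_1v=r_n[T]^{m_2}w\longrightarrow w.
\end{equation*}
So $(v,w)\in\overline{A_2}$, and applied to an arbitrary approximating sequence in $A_1$ followed by a diagonal extraction this yields $\overline{A_1}\subseteq\overline{A_2}$. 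The most delicate point will be the careful commutation bookkeeping through the unbounded operator $e_1(T)^{-1}$; here the intrinsicality of both $r_n$ and $e_1$, together with the injectivity of $T$ which makes $e_1(T)^{-1}$ a genuine (single-valued) operator, are the essential ingredients.
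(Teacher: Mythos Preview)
Your treatment of (ii) and (iii) is correct and matches the paper. For (i), the easy inclusion $A_2\subseteq A_1$, hence $\overline{A_2}\subseteq\overline{A_1}$, is also argued exactly as in the paper.

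The reverse inclusion contains a genuine gap. You assert that ``intrinsicality of $r_n$ ensures that $r_n[T]^{m_2}$ commutes with $(fe_1)(T)$'', and use this to pass from $(fe_1)(T)\,r_n[T]^{m_2}\,e_1(T)^{-1}v$ to $r_n[T]^{m_2}A_1v$. But $(fe_1)(T)$ is the right $\omega$-calculus of a genuinely non-intrinsic function; in its integral representation $\int(fe_1)(s)\,ds_J\,S_R^{-1}(s,T)$ the Clifford scalar $(fe_1)(s)\,ds_J$ acts by \emph{left} multiplication, and a merely right-linear operator such as $r_n[T]^{m_2}$ has no reason to commute with left Clifford multiplication. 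The commutation results available (e.g.\ \cite[Corollary~3.18~iii)]{MS24}) cover only intrinsic--intrinsic pairs; indeed, it is precisely this failure of commutation that forces the right $H^\infty$-calculus into the multivalued framework in the first place.

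The repair is short and is exactly what the paper does. You already correctly reach $A_2v_n=(fe_1)(T)\,r_n[T]^{m_2}\,e_1(T)^{-1}v$, since this step uses only the intrinsic--intrinsic commutation $e_1(T)^{-1}r_n[T]^{m_2}\supseteq r_n[T]^{m_2}e_1(T)^{-1}$. Now do not commute further: use the \emph{boundedness} of $(fe_1)(T)$ together with $r_n[T]^{m_2}\,e_1(T)^{-1}v\to e_1(T)^{-1}v$ from \eqref{Eq_dom_ran_density_5} to conclude $A_2v_n\to(fe_1)(T)\,e_1(T)^{-1}v=A_1v=w$ directly. The diagonal extraction you mention at the end is also unnecessary: once $A_1\subseteq\overline{A_2}$ is established, closedness of $\overline{A_2}$ gives $\overline{A_1}\subseteq\overline{A_2}$ immediately.
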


\begin{proof}
ii),\,iii)\;\;The fact that $f(T)$ coincides for $f\in\mathcal{N}^\poly(D_\theta)$ with the left $H^\infty$-functional calculus, and for $f\in\mathcal{SH}_R^0(D_\theta)$ with the right $\omega$-functional calculus, was proven in Theorem~\ref{thm_fT_as_closure} and Theorem~\ref{thm_Right_omega_as_closure}. \medskip

i)\;\;In order to prove that \eqref{Eq_Right_Hinfty} is independent of the choice of $m$, let us consider the function $e(s)=\frac{s}{1+s^2}$ and integers $m_1,m_2\in\mathbb{N}$, with $m_1\geq m_2>\alpha$. We will prove that
\begin{equation}\label{Eq_Right_Hinfty_1}
\overline{(fe^{m_1})(T)e(T)^{-m_1}}=\overline{(fe^{m_2})(T)e(T)^{-m_2}}.
\end{equation}
The inclusion $\text{\grqq}\subseteq\text{\grqq}$ follows immediately from the product rule of the $\omega$-functional calculus \cite[Theorem~3.11]{MS24}, namely
\begin{align}
(fe^{m_1})(T)e(T)^{-m_1}&=(fe^{m_2}e^{m_1-m_2})(T)e(T)^{-(m_1-m_2)}e(T)^{-m_2} \notag \\
&=(fe^{m_2})(T)e(T)^{m_1-m_2}e(T)^{-(m_1-m_2)}e(T)^{-m_2} \notag \\
&\subseteq(fe^{m_2})(T)e(T)^{-m_2}. \label{Eq_Right_Hinfty_4}
\end{align}
Taking the closures of both sides gives the first inclusion in \eqref{Eq_Right_Hinfty_1}. \medskip

For the inverse inclusion $\text{\grqq}\supseteq\text{\grqq}$ in \eqref{Eq_Right_Hinfty_1} let $v\in\dom\big((fe^{m_2})(T)e(T)^{-m_2}\big)$, i.e.
\begin{equation*}
v\in\ran(e(T)^{m_2})=\dom(T^{m_2})\cap\ran(T^{m_2}).
\end{equation*}
With the operators $r_n[T]$ in \eqref{Eq_dom_ran_density_2} we now define the vectors $v_n:=r_n[T]^{m_1}v$, $n\in\mathbb{N}$. Then by \eqref{Eq_dom_ran_density_5} and the fact that $\overline{\ran}(T)=V$ by Theorem~\ref{thm_Domain_dense} ii), there converges
\begin{equation}\label{Eq_Right_Hinfty_2}
\lim\limits_{n\rightarrow\infty}v_n=v.
\end{equation}
By \eqref{Eq_fT_as_closure_1} there is $v_n\in\dom\big((fe^{m_1})(T)e(T)^{-m_1}\big)$, which means that with the operator inclusions \eqref{Eq_Right_Hinfty_4} and with $e(T)^{-m_2}r_n[T]^{m_1}\supseteq r_n[T]^{m_1}e(T)^{-m_2}$, we can write
\begin{equation*}
(fe^{m_1})(T)e(T)^{-m_1}v_n=(fe^{m_2})(T)e(T)^{-m_2}v_n=(fe^{m_2})(T)r_n[T]^{m_1}e(T)^{-m_2}v.
\end{equation*}
Using once more the convergence
\begin{equation*}
\lim\limits_{n\rightarrow\infty}r_n[T]^{m_1}e(T)^{-m_2}v=e(T)^{-m_2}v,
\end{equation*}
from \eqref{Eq_dom_ran_density_5} with the value $u=e(T)^{-m_2}v$, we conclude from the boundedness of the operator $(fe^{m_2})(T)$, that there converges
\begin{equation}\label{Eq_Right_Hinfty_3}
\lim\limits_{n\rightarrow\infty}(fe^{m_1})(T)e(T)^{-m_1}v_n=(fe^{m_2})(T)e(T)^{-m_2}v.
\end{equation}
The two limits \eqref{Eq_Right_Hinfty_2} and \eqref{Eq_Right_Hinfty_3} now show that
\begin{equation*}
\big(v\,,\,(fe^{m_2})(T)e(T)^{-m_2}v\big)\in\overline{(fe^{m_1})(T)e(T)^{-m_1}},
\end{equation*}
where we used the notation of multivalued operators, since $\overline{(fe^{m_1})(T)e(T)^{-m_1}}$ on the right hand side is a right linear subspace of $V\times V$. This proves that the graph of the operator $(fe^{m_2})(T)e(T)^{-m_2}$ is contained in
\begin{equation*}
\text{graph}\big((fe^{m_2})(T)e(T)^{-m_2}\big)\subseteq\overline{(fe^{m_1})(T)e(T)^{-m_1}},
\end{equation*}
and hence also the closure (in the sense of multivalued operators) is contained in
\begin{equation*}
\overline{(fe^{m_2})(T)e(T)^{-m_2}}\subseteq\overline{(fe^{m_1})(T)e(T)^{-m_1}}. \qedhere
\end{equation*}
\end{proof}

Let us now collect some basic properties of the right $H^\infty$-functional calculus \eqref{Eq_Right_Hinfty}.

\begin{lem}
Let $V$ be a $\mathbb{R}$-reflexive Banach module and
$T\in\mathcal{K}(V)$ injective, bisectorial of angle $\omega\in(0,\frac{\pi}{2})$. Then for every $f\in\mathcal{SH}_L^\poly(D_\theta)$, $\theta\in(\omega,\frac{\pi}{2})$ the left $H^\infty$-functional calculus \eqref{Eq_Left_Hinfty} has the following properties:

\begin{enumerate}
\item[i)] $f(T)$ is a closed, right linear operator; \medskip

\item[ii)] For every $g\in\mathcal{SH}_L^0(D_\theta)$ and $a\in\mathbb{R}_n$, it satisfies the linearity properties
\begin{equation}\label{Eq_Linearity_Left_Hinfty}
(f+g)(T)=f(T)+g(T)\qquad\text{and}\qquad(fa)(T)=f(T)a.
\end{equation}
\end{enumerate}

Moreover, for every $f\in\mathcal{SH}_R^\poly(D_\theta)$, $\theta\in(\omega,\frac{\pi}{2})$, the right $H^\infty$-functional calculus \eqref{Eq_Right_Hinfty} has the following properties:

\begin{enumerate}
\item[i')] $f(T)$ is a closed, right linear multivalued operator; \medskip

\item[ii')] For every $g\in\mathcal{SH}_R^0(D_\theta)$ and $a\in\mathbb{R}_n$, it satisfies the linearity properties
\begin{equation}\label{Eq_Linearity_Right_Hinfty}
(f+g)(T)=f(T)+g(T)\qquad\text{and}\qquad(af)(T)=af(T).
\end{equation}
\end{enumerate}
\end{lem}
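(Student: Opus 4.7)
The approach is to reduce both the left (i, ii) and right (i', ii') statements to the corresponding linearity and product rules of the underlying $\omega$-functional calculus on $\mathcal{SH}_L^0$ and $\mathcal{SH}_R^0$, combined with the identifications from Theorems~\ref{thm_fT_as_closure} and~\ref{thm_Right_omega_as_closure} and the commutation of the intrinsic regularizer $e(T)$ with the calculus.

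For the left case, i) follows at once: $f(T) = e(T)^{-1}(ef)(T)$ is the composition of a bounded right linear operator with the inverse of an injective right linear operator, and closedness is recorded in \cite[Lemma 5.5]{MS24}. For ii), the $\omega$-calculus linearity gives
\begin{equation*}
(f+g)(T) = e(T)^{-1}(ef)(T) + e(T)^{-1}(eg)(T) = f(T) + g(T),
\end{equation*}
where the last step uses the product rule $(eg)(T) = e(T)g(T)$ together with injectivity of $e(T)$ to collapse $e(T)^{-1}(eg)(T) = g(T)$. The identity $(fa)(T) = f(T)a$ follows by pulling $a$ out on the right of the $\omega$-integral \eqref{Eq_omega_functional_calculus}, yielding $(efa)(T) = (ef)(T)a$, which then commutes past the right linear $e(T)^{-1}$.

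For the right case, i') is built into Definition~\ref{defi_Right_Hinfty}: $f(T)$ is closed by construction and right linearity is preserved under composition and under the multivalued closure \eqref{Eq_Closure}. The sum in ii') parallels the left case: $((f+g)e)(T) = (fe)(T) + (ge)(T)$ by linearity of the right $\omega$-calculus, and Theorem~\ref{thm_Right_omega_as_closure} identifies $\overline{(ge)(T)e(T)^{-1}} = g(T) \in \mathcal{B}(V)$; exploiting that addition of a bounded everywhere-defined operator commutes with multivalued closure, I obtain
\begin{equation*}
(f+g)(T) = \overline{(fe)(T)e(T)^{-1}} + g(T) = f(T) + g(T).
\end{equation*}
For the scalar identity, commutation of $a$ with the intrinsic $e$ combined with linearity of the right $\omega$-calculus give $((af)e)(T) = a(fe)(T)$, so $(af)(T) = \overline{a(fe)(T)e(T)^{-1}}$, and the inclusion $af(T) \subseteq (af)(T)$ follows immediately from continuity of left multiplication by $a$ applied to approximating sequences for elements of $f(T)$.

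The main obstacle is the reverse inclusion $(af)(T) \subseteq af(T)$ in the scalar identity. Since $a \in \mathbb{R}_n$ may be a zero divisor once $n \geq 3$, left multiplication by $a$ does not commute with multivalued closure in general, so one cannot directly pull $a$ outside the closure defining $(af)(T)$. The plan is to argue at the graph level via \eqref{Eq_Sum_product} and \eqref{Eq_Closure}: any $(v, u) \in (af)(T)$ arises as a limit of pairs $(v_n, a(fe)(T)u_n)$ with $v_n = e(T)u_n$, and since $(v_n, (fe)(T)u_n) \in (fe)(T)e(T)^{-1} \subseteq f(T)$, each approximating pair already lies in $af(T)$ by \eqref{Eq_Sum_product}; consistency with the closure convention of Definition~\ref{defi_Multivalued_operators} then places the limit inside $af(T)$ when interpreted as a closed multivalued operator.
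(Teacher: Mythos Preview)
Your treatment of i), ii), i') and the additive half of ii') is essentially the paper's proof. The paper also cites \cite[Lemma~5.5]{MS24} for closedness in i), writes out the same splitting $e(T)^{-1}\big((ef)(T)+e(T)g(T)\big)=f(T)+g(T)$ in ii), and for the additive identity in ii') proves the two inclusions by exactly the sequence manipulation underlying your lemma $\overline{A+B}=\overline{A}+B$ for bounded everywhere-defined $B$. Your packaging of that last fact is slightly slicker than the paper's explicit two-inclusion argument, but the content is the same.

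The divergence is the scalar identity $(af)(T)=af(T)$ in ii'). The paper disposes of it in one line,
\[
(af)(T)=\overline{(afe)(T)e(T)^{-1}}=a\,\overline{(fe)(T)e(T)^{-1}}=af(T),
\]
i.e.\ it asserts $\overline{aA}=a\overline{A}$ for $A=(fe)(T)e(T)^{-1}$ without further comment. You correctly establish $af(T)\subseteq(af)(T)$ via continuity of left multiplication. For the reverse inclusion you are right that zero divisors obstruct a naive argument, but your proposed fix does not work: knowing that each approximating pair $(v_n,a(fe)(T)u_n)$ lies in $af(T)$ only gives $(v,u)\in\overline{af(T)}$, and the definition of $s\mathcal{T}$ in \eqref{Eq_Sum_product} takes no closure. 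Your final move, reinterpreting $af(T)$ ``as a closed multivalued operator'' by appeal to Definition~\ref{defi_Multivalued_operators}, is a change of definition rather than an argument; with \eqref{Eq_Sum_product} as written, $af(T)=a\overline{A}$ is the continuous linear image of a closed subspace and need not be closed when $a$ is non-invertible. So either one supplies a separate reason why $a\,f(T)$ is closed in this specific situation, or one simply adopts the paper's assertion $\overline{aA}=a\overline{A}$; your paragraph does neither.
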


\begin{proof}
Throughout this proof lat $e(s)=\frac{s^m}{(1+s^2)^m}$ be a regularizer of $f$ according to Definition~\ref{defi_Left_Hinfty} or Definition~\ref{defi_Right_Hinfty} respectively. \medskip

i)\;\;Since the left $H^\infty$-functional calculus \eqref{Eq_Left_Hinfty} is the product of a closed and a bounded right linear operator, $f(T)$ is a closed and right linear operator as well. \medskip

ii)\;\;Since $g\in\mathcal{SH}_L^0(D_\theta)$, the function $e$ is a regularizer of the sum $f+g$ as well. It then follows from the definition \eqref{Eq_Left_Hinfty} of the left $H^\infty$-functional calculus, that
\begin{align*}
(f+g)(T)&=e(T)^{-1}(e(f+g))(T)=e(T)^{-1}((ef)(T)+e(T)g(T)) \\
&=e(T)^{-1}(ef)(T)+g(T)=f(T)+g(T).
\end{align*}
The second part of the linearity in \eqref{Eq_Linearity_Left_Hinfty} follows from
\begin{equation*}
(fa)(T)=e(T)^{-1}(fa)(T)=e(T)^{-1}(ef)(T)a=f(T)a.
\end{equation*}
i')\;\;$f(T)$ is defined as a closure and hence a closed, right linear multivalued operator. \medskip

ii')\;\;The second part of the linearity in \eqref{Eq_Linearity_Right_Hinfty} follows directly from the definition \eqref{Eq_Right_Hinfty}, namely
\begin{equation*}
(af)(T)=\overline{(afe)(T)e(T)^{-1}}=a\,\overline{(fe)(T)e(T)^{-1}}=af(T).
\end{equation*}
For the additive linearity in \eqref{Eq_Linearity_Left_Hinfty}, we first note that due to $g\in\mathcal{SH}_R^0(D_\theta)$, the function $e$ is a regularizer of the sum $f+g$ as well. In order to prove $\text{\grqq}\subseteq\text{\grqq}$, we know from the linearity of the $\omega$-functional calculus, that
\begin{align*}
((f+g)e)(T)e(T)^{-1}&=((fe)(T)+g(T)e(T))e(T)^{-1} \\
&=(fe)(T)e(T)^{-1}+g(T)e(T)e(T)^{-1} \\
&\subseteq\overline{(fe)(T)e(T)^{-1}}+g(T)=f(T)+g(T).
\end{align*}
Since the right hand side is the sum of the closed multivalued operator $f(T)$ and the bounded operator $g(T)$, it is a closed multivalued operator as well. Hence there follows
\begin{equation*}
(f+g)(T)=\overline{((f+g)e)(T)e(T)^{-1}}\subseteq f(T)+g(T).
\end{equation*}
For the inverse inclusion $\text{\grqq}\supseteq\text{\grqq}$ let $(v,w)\in f(T)+g(T)$. Since $g(T)$ is a bounded operator, this means by the definition of the sum of multivalued operators in \eqref{Eq_Sum_product}, that $(v,w-g(T)v)\in f(T)$. Moreover, by the definition of $f(T)$ as the closure \eqref{Eq_Right_Hinfty}, there exists a sequence $(v_n)_n\in\dom\big((fe)(T)e(T)^{-1}\big)=\ran(e(T))$, such that
\begin{equation}\label{Eq_Linearity_Hinfty_1}
v=\lim\limits_{n\rightarrow\infty}v_n\qquad\text{and}\qquad w-g(T)v=\lim\limits_{n\rightarrow\infty}(fe)(T)e(T)^{-1}v_n.
\end{equation}
Since $g(T)$ is bounded, there also converges
\begin{equation}\label{Eq_Linearity_Hinfty_2}
g(T)v=\lim\limits_{n\rightarrow\infty}g(T)v_n=\lim\limits_{n\rightarrow\infty}g(T)e(T)e(T)^{-1}v_n=\lim\limits_{n\rightarrow\infty}(ge)(T)e(T)^{-1}v_n.
\end{equation}
Combining now the second limit in \eqref{Eq_Linearity_Hinfty_1} with the limit \eqref{Eq_Linearity_Hinfty_2}, we get
\begin{equation*}
w=\lim\limits_{n\rightarrow\infty}((fe)(T)+(ge)(T))e(T)^{-1}v_n=\lim\limits_{n\rightarrow\infty}((f+g)e)(T)e(T)^{-1}v_n.
\end{equation*}
Together with the first limit in \eqref{Eq_Linearity_Hinfty_1} this shows that
\begin{equation*}
(v,w)\in\overline{((f+g)e)(T)e(T)^{-1}}=(f+g)(T). \qedhere
\end{equation*}
\end{proof}

\begin{prop}
Let $V$ be a $\mathbb{R}$-reflexive Banach module and $T\in\mathcal{K}(V)$ injective, bisectorial of angle $\omega\in(0,\frac{\pi}{2})$. Then for every $f\in\mathcal{SH}_R^\poly(D_\theta)$, there is
\begin{equation*}
\overline{\dom}(f(T))=V.
\end{equation*}
Here, the domain of the multivalued operator $f(T)$ is understood in the sense of \eqref{Eq_Domain}.
\end{prop}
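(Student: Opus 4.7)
The plan is to prove the stronger statement that the (possibly non-closed) operator $(fe)(T)e(T)^{-1}$ already has dense domain in $V$, since by Definition~\ref{defi_Right_Hinfty} the multivalued closure satisfies $\dom(f(T)) \supseteq \dom\bigl((fe)(T)e(T)^{-1}\bigr)$. First I would identify this domain explicitly. Because the regularizer exponent satisfies $m > \alpha$, the product $fe$ belongs to $\mathcal{SH}_R^0(D_\theta)$, so $(fe)(T)$ is a bounded operator on $V$ by the $\omega$-functional calculus of Definition~\ref{defi_Omega_functional_calculus}. Together with the representation $e(T) = T^m(1+T^2)^{-m}$ already used in the proof of Theorem~\ref{thm_fT_as_closure}, this yields
\[ \dom\bigl((fe)(T)e(T)^{-1}\bigr) \;=\; \ran(e(T)) \;=\; \dom(T^m) \cap \ran(T^m). \]

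In the second step I would invoke the density results already proved. Since $T$ is injective, Theorem~\ref{thm_Domain_dense}~ii) gives $\overline{\ran}(T) = V$, and then Theorem~\ref{thm_dom_ran_density} delivers $\overline{\dom(T^m) \cap \ran(T^m)} = V$. Chaining the inclusions yields $V \supseteq \overline{\dom(f(T))} \supseteq \overline{\ran(e(T))} = V$, which is the claim. If an explicit approximating sequence is preferred, I would reuse the mollifiers $r_n[T]$ from \eqref{Eq_dom_ran_density_2}: for any $v \in V$ the vectors $v_n := r_n[T]^m v$ lie in $\ran(e(T)) \subseteq \dom(f(T))$ by \eqref{Eq_fT_as_closure_1} and converge to $v$ by \eqref{Eq_dom_ran_density_5}. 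I do not expect any genuine obstacle here: the heavy lifting has been absorbed into Theorems~\ref{thm_Domain_dense} and \ref{thm_dom_ran_density}, and the present statement is a one-line consequence once the domain of $(fe)(T)e(T)^{-1}$ is written out and the injectivity of $T$ is used to upgrade the density from $\overline{\ran}(T)$ to all of $V$.
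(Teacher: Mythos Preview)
Your proposal is correct and follows essentially the same argument as the paper's proof: identify $\dom\bigl((fe)(T)e(T)^{-1}\bigr)=\ran(e(T))=\dom(T^m)\cap\ran(T^m)$, then invoke Theorem~\ref{thm_dom_ran_density} together with Theorem~\ref{thm_Domain_dense}~ii) and the injectivity of $T$ to conclude density in $V$. The paper's proof is slightly more terse, but the logic is identical.
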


\begin{proof}
If follows directly from the definition of $f(T)$ in \eqref{Eq_Right_Hinfty}, that
\begin{equation*}
\dom(f(T))\supseteq\dom((fe)(T)e(T)^{-1})=\ran(e(T))=\dom(T^m)\cap\ran(T^m).
\end{equation*}
However, we know from Theorem~\ref{thm_dom_ran_density} and Theorem~\ref{thm_Domain_dense} that the right hand side is dense in $V$, and hence also $\overline{\dom}(f(T))=V$ is dense.
\end{proof}

\begin{thm}[Product rule]
Let $V$ be a $\mathbb{R}$-reflexive Banach module, $T\in\mathcal{K}(V)$ injective, bisectorial of angle $\omega\in(0,\frac{\pi}{2})$, and $\theta\in(\omega,\frac{\pi}{2})$. Then the left $H^\infty$-functional calculus \eqref{Eq_Left_Hinfty} satisfies the product rule

\begin{enumerate}
\item[i)] $(fg)(T)\supseteq f(T)g(T)$,\qquad $f\in\mathcal{N}^0(D_\theta)$, $g\in\mathcal{SH}_L^\poly(D_\theta)$; \medskip

\item[ii)] $(fg)(T)=f(T)g(T)$,\qquad $f\in\mathcal{N}^\poly(D_\theta)$, $g\in\mathcal{SH}_L^0(D_\theta)$.
\end{enumerate}

Moreover, the right $H^\infty$-functional calculus \eqref{Eq_Right_Hinfty} satisfies the product rule

\begin{enumerate}
\item[i')] $(fg)(T)=\overline{f(T)g(T)}$,\qquad $f\in\mathcal{SH}_R^0(D_\theta)$, $g\in\mathcal{N}^\poly(D_\theta)$; \medskip

\item[ii')] $(fg)(T)\subseteq f(T)g(T)$,\qquad $f\in\mathcal{SH}_R^\poly(D_\theta)$, $g\in\mathcal{N}^0(D_\theta)$.
\end{enumerate}
\end{thm}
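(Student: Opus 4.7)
The plan is to reduce each of the four product rules to the product rule for the $\omega$-functional calculus (\cite[Theorem~3.11]{MS24} and its right-hand analogue), and then unwrap the regularizations; where multivalued operators appear, the computation is carried through a density/diagonal argument based on the operators $r_n[T]^m$ of \eqref{Eq_dom_ran_density_2}. The key algebraic fact used throughout is that $e$ and any intrinsic function commute as functions, so by the $\omega$-product rule $e(T)$ commutes with $f(T)$ whenever $f$ is intrinsic; this lets us shuffle $e(T)^{-1}$ past an intrinsic factor without leaving the appropriate domain.

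For parts i) and ii) no closures are involved. In i), if $v\in\dom(f(T)g(T))$ then, since $f(T)$ is bounded, the condition reduces to $(eg)(T)v\in\ran(e(T))$; commuting the intrinsic $f(T)$ past $e(T)^{-1}$ and invoking the $\omega$-product rule $f(T)(eg)(T)=(feg)(T)$ yields $f(T)g(T)v=e(T)^{-1}(efg)(T)v=(fg)(T)v$. For ii), one verifies that both $\dom((fg)(T))$ and $\dom(f(T)g(T))$ equal $\{v\in V:(efg)(T)v\in\ran(e(T))\}$, using the $\omega$-product rule $(efg)(T)=(ef)(T)g(T)$ (which applies because $ef\in\mathcal{SH}_L^0$ and $g\in\mathcal{SH}_L^0$), so the two operators coincide.

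For part ii'), let $(v,w)\in(fg)(T)$; by Definition~\ref{defi_Right_Hinfty} there exist $u_n\in V$ with $v_n:=e(T)u_n\to v$ and $(fge)(T)u_n\to w$. Setting $\tilde v_n:=g(T)v_n$ and using that $g$ is intrinsic so $g(T)$ and $e(T)$ commute, one gets $\tilde v_n=e(T)g(T)u_n\in\ran(e(T))$, and boundedness of $g(T)$ gives $\tilde v_n\to g(T)v$. The right $\omega$-product rule then yields $(fe)(T)e(T)^{-1}\tilde v_n=(fe)(T)g(T)u_n=(feg)(T)u_n=(fge)(T)u_n\to w$, so $(g(T)v,w)\in\overline{(fe)(T)e(T)^{-1}}=f(T)$ and therefore $(v,w)\in f(T)g(T)$.

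For i') the inclusion $\subseteq$ goes via the same type of computation along a defining sequence of $(v,w)\in(fg)(T)$, rewriting $(fge)(T)e(T)^{-1}v_n=f(T)\bigl((ge)(T)e(T)^{-1}v_n\bigr)$ by the right $\omega$-product rule, where the bracketed element is in the graph of $g(T)=\overline{(ge)(T)e(T)^{-1}}$ by Theorem~\ref{thm_fT_as_closure}. For the reverse inclusion $\supseteq$, given $(v,w)\in\overline{f(T)g(T)}$ pick $v_n\in\dom(g(T))$ with $v_n\to v$ and $f(T)g(T)v_n\to w$; for each $n$ use the closure $g(T)=\overline{(ge)(T)e(T)^{-1}}$ to approximate $v_n$ by $u_{n,k}\in\ran(e(T))$ with $(ge)(T)e(T)^{-1}u_{n,k}\to g(T)v_n$ as $k\to\infty$. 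Since $f(T)$ is bounded, $f(T)(ge)(T)e(T)^{-1}u_{n,k}=(fge)(T)e(T)^{-1}u_{n,k}\to f(T)g(T)v_n$, and a standard diagonal selection produces a sequence in $(fge)(T)e(T)^{-1}$ converging to $(v,w)$, so $(v,w)\in(fg)(T)$. The main obstacle is precisely this nested closure: intertwining the closure defining the right $H^\infty$-calculus with the closure defining $g(T)$ forces the diagonal argument, while all remaining steps are purely algebraic and delegated to the $\omega$-product rule and the commutativity of intrinsic operators established earlier in the paper.
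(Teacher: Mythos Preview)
Your proof is correct and follows essentially the same approach as the paper: reduce each case to the $\omega$-product rule and exploit that $e(T)$ commutes with any intrinsic $f(T)$. The only notable difference is in i')\,$\supseteq$, where the paper bypasses your diagonal argument by invoking the relation-theoretic fact $B\overline{A}\subseteq\overline{BA}$ for bounded $B$, giving $f(T)g(T)=f(T)\overline{(ge)(T)e(T)^{-1}}\subseteq\overline{f(T)(ge)(T)e(T)^{-1}}=(fg)(T)$ in one line.
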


\begin{proof}
For the whole proof let $e(s)=\frac{s^m}{(1+s^2)^m}$ be a regularizer of $g$ in the cases i), i') and a regularizer of $f$ in the cases ii), ii'). Since the respective other function is bounded in all the cases it turns out that $e$ is always a regularizer of the produce $fg$ as well. \medskip

i)\;\;The commutation property $e(T)f(T)=f(T)e(T)$ from \cite[Corollary~3.18~iii)]{MS24}, leads us to
\begin{equation*}
f(T)e(T)^{-1}=e(T)^{-1}e(T)f(T)e(T)^{-1}=e(T)^{-1}f(T)e(T)e(T)^{-1}\supseteq e(T)^{-1}f(T),
\end{equation*}
and consequently there follows the product rule
\begin{equation*}
(fg)(T)=e(T)^{-1}(efg)(T)=e(T)^{-1}f(T)(eg)(T)\supseteq f(T)e(T)^{-1}(eg)(T)=f(T)g(T).
\end{equation*}
ii)\;\;Using that $g(T)$ is a bounded operator, there follows from the product rule of the $\omega$-functional calculus that
\begin{equation*}
(fg)(T)=e(T)^{-1}(efg)(T)=e(T)^{-1}(ef)(T)g(T)=f(T)g(T).
\end{equation*}
i')\;\;For the inclusion $\text{\grqq}\supseteq\text{\grqq}$, we note that $g$ is intrinsic, which means that by Theorem~\ref{thm_fT_as_closure}, $g(T)$ can also be written by the right $H^\infty$-functional calculus \eqref{Eq_Left_Hinfty}, and we get
\begin{equation*}
f(T)g(T)=f(T)\overline{(ge)(T)e(T)^{-1}}\subseteq\overline{f(T)(ge)(T)e(T)^{-1}}=\overline{(fge)(T)e(T)^{-1}}=(fg)(T),
\end{equation*}
where we used that $B\overline{A}\subseteq\overline{BA}$ if $B$ is a bounded operator and $A$ a multivalued operator. Since the right hand side is closed, we can put the closure on the left hand side also and get
\begin{equation*}
\overline{f(T)g(T)}\subseteq(fg)(T).
\end{equation*}
For the inverse inclusion $\text{\grqq}\subseteq\text{\grqq}$, we have
\begin{equation*}
(fge)(T)e(T)^{-1}=f(T)(ge)(T)e(T)^{-1}\subseteq f(T)\overline{(ge)(T)e(T)^{-1}}=f(T)g(T).
\end{equation*}
Performing the closure on both sides of this inclusion, we get
\begin{equation*}
(fg)(T)=\overline{(fge)(T)e(T)^{-1}}\subseteq\overline{f(T)g(T)}.
\end{equation*}
ii')\;\;For the last product rule we consider $g(T)$ is a bounded operator. Then there is
\begin{equation*}
(fge)(T)e(T)^{-1}=(fe)(T)g(T)e(T)^{-1}\subseteq(fe)(T)e(T)^{-1}g(T)\subseteq f(T)g(T).
\end{equation*}
Since the right hand side is closed, as the product of a closed multivalued operator and a bounded operator, we can also perform the closure of the left hand side and conclude $(fg)(T)\subseteq f(T)g(T)$.
\end{proof}

\begin{defi}
Let $T\in\mathcal{K}(V)$. Then for Clifford numbers $p_0,p_1,\dots,p_N\in\mathbb{R}_n$ and the corresponding right polynomial $p(s)=p_0+p_1s+\dots p_Ns^N$, we define the \textit{right polynomial functional calculus}
\begin{equation*}
p[T]:=p_0+p_1T+\dots+p_NT^N,\qquad\text{with }\dom(p[T]):=\dom(T^N).
\end{equation*}
\end{defi}

\begin{rem}
The natural definition
\begin{equation}\label{Eq_Left_polynomial_functional_calculus}
p[T]=p_0+Tp_1+\dots+T^Np_N,
\end{equation}
of the polynomial functional calculus for left polynomials $p(s)=p_0+sp_1+\dots+s^Np_N$ does not work, since it is not defined on $\dom(T^N)$ and also the expression \eqref{Eq_Left_polynomial_functional_calculus} does not lead a right linear operator.
\end{rem}

\begin{thm}
Let $V$ be a $\mathbb{R}$-reflexive Banach module and
$T\in\mathcal{K}(V)$ injective, bisectorial of angle $\omega\in(0,\frac{\pi}{2})$. Furthermore let $p$ be a right polynomial, and $q$ an intrinsic polynomial which does not admit any zeros in $\overline{D_\omega}$. Then $q[T]$ is bijective, there exists some $\theta\in(\omega,\frac{\pi}{2})$ such that $\frac{p}{q}\in\mathcal{SH}_R^\poly(D_\theta)$, and the right $H^\infty$-functional calculus of $pq^{-1}$ is given by
\begin{equation}\label{Eq_Rational_functional_calculus}
(pq^{-1})(T)=\overline{p[T]q[T]^{-1}}.
\end{equation}
\end{thm}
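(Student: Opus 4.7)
The plan is to handle the three assertions in order. For the bijectivity of $q[T]$, I would exploit that an intrinsic polynomial has real coefficients and therefore factors over $\mathbb{R}$ into real linear factors $(s-a)$ and irreducible quadratic factors $s^2 - 2\alpha s + (\alpha^2+\beta^2)$. Substituting $T$ yields the operators $(T-a)$, bounded invertible because $a\in\rho_S(T)$ (indeed $(T-a)^{-1}=-S_L^{-1}(a,T)$), and $Q_s[T]$ for $s=\alpha+J\beta$, bounded invertible because the whole sphere $[s]$ lies in $\rho_S(T)$; both follow from the hypothesis that $q$ has no zeros in $\overline{D_\omega}\supseteq\sigma_S(T)$. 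Composing these factors, $q[T]$ is bijective from $\dom(T^{\deg q})$ onto $V$ with bounded inverse.

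For $pq^{-1}\in\mathcal{SH}_R^\poly(D_\theta)$ I would note that the zero set of $q$ is closed and disjoint from $\overline{D_\omega}$, so by continuity some slightly larger double sector $D_\theta$ with $\theta\in(\omega,\frac{\pi}{2})$ still avoids it. On $D_\theta$, $1/q$ is intrinsic and slice hyperholomorphic, with $O(|s|^{-\deg q})$ decay at infinity and at worst $O(|s|^{-k})$ growth at the origin (where $k$ is the order of a possible zero of $q$ at $0$). Multiplying by the right polynomial $p$ preserves right slice hyperholomorphicity and yields polynomial growth, so $pq^{-1}\in\mathcal{SH}_R^\poly(D_\theta)$.

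The heart of the proof is \eqref{Eq_Rational_functional_calculus}. Fix $e(s)=s^m/(1+s^2)^m$ with $m$ so large that both $m>\alpha$ (regularizing $pq^{-1}$) and $m\geq\deg p$, ensuring $\ran(e(T))=\dom(T^m)\cap\ran(T^m)\subseteq\dom(p[T])$. Using the product rule of the $\omega$-calculus (applied first to $q\cdot(pq^{-1}e)=pe$, with $q$ intrinsic and $pq^{-1}e\in\mathcal{SH}_R^0$, and then to $p\cdot e$), together with the agreement of the $\omega$-calculus with the polynomial functional calculus on $p$ and $q$, one obtains the pivotal identity
\begin{equation*}
(pq^{-1}e)(T)\;=\;q[T]^{-1}(pe)(T)\;=\;q[T]^{-1}p[T]e(T)
\end{equation*}
as bounded operators on $V$. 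The real coefficients of $q$ make $p[T]$ and $q[T]^{-1}$ commute on their natural common domain, so for every $v\in\ran(e(T))$ we obtain $(pq^{-1}e)(T)e(T)^{-1}v=q[T]^{-1}p[T]v=p[T]q[T]^{-1}v$. Taking closures immediately yields the inclusion $(pq^{-1})(T)\subseteq\overline{p[T]q[T]^{-1}}$.

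For the converse inclusion, given $v\in\dom(p[T]q[T]^{-1})$, I would form the approximating sequence $v_n:=r_n[T]^m v$ from \eqref{Eq_dom_ran_density_2}. Theorem~\ref{thm_dom_ran_density}, together with the injectivity of $T$ and Theorem~\ref{thm_Domain_dense}~ii), guarantees $v_n\in\ran(e(T))$ and $v_n\to v$. Since $r_n[T]$ is built from pseudo-resolvents of $T$ it commutes with $p[T]$ and $q[T]^{-1}$, so
\begin{equation*}
(pq^{-1}e)(T)e(T)^{-1}v_n\;=\;p[T]q[T]^{-1}v_n\;=\;r_n[T]^m\,p[T]q[T]^{-1}v\;\longrightarrow\;p[T]q[T]^{-1}v,
\end{equation*}
where the limit is \eqref{Eq_dom_ran_density_5} applied to $u:=p[T]q[T]^{-1}v$. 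Hence $(v,p[T]q[T]^{-1}v)\in(pq^{-1})(T)$, and since $(pq^{-1})(T)$ is closed as a multivalued operator this closes the equality. The main obstacle I expect is justifying the product rule $(q\cdot pq^{-1}e)(T)=q[T](pq^{-1}e)(T)$ in this mixed polynomial/$\omega$-calculus setting with the correct domain accounting, together with rigorously establishing the commutation of $p[T]$, $q[T]^{-1}$ and $r_n[T]^m$ on their natural common domains; once these are in hand, the remainder of the argument is a fairly standard density argument using the operators $r_n[T]$ from Theorem~\ref{thm_dom_ran_density}.
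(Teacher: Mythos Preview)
Your proof is correct and follows essentially the same architecture as the paper's: first dispose of bijectivity of $q[T]$ and the membership $pq^{-1}\in\mathcal{SH}_R^\poly(D_\theta)$, then establish the operator inclusion $(pq^{-1}e)(T)e(T)^{-1}\subseteq p[T]q[T]^{-1}$, and finally use the approximants $v_n=r_n[T]^m v$ together with \eqref{Eq_dom_ran_density_5} for the reverse inclusion. Your argument for the reverse inclusion is in fact identical to the paper's.

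The one point of divergence is how you reach the pivotal identity. You propose to first peel off $q$ via a product rule $(q\cdot pq^{-1}e)(T)=q[T](pq^{-1}e)(T)$, which pairs an intrinsic polynomial with the \emph{non-intrinsic} right $\omega$-function $pq^{-1}e$; you correctly flag this as the main obstacle. The paper avoids this detour entirely by decomposing $p=\sum_k p_k s^k$ first and using only the product rule $(s^kq^{-1}e)(T)=T^k(q^{-1}e)(T)$ for a monomial with the \emph{intrinsic} function $q^{-1}e$ (available from \cite[Proposition~3.15]{MS24}), together with the intrinsic rational identity $(q^{-1}e)(T)=q[T]^{-1}e(T)$ from \cite[Theorem~3.16]{MS24}. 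Since you already decompose $p$ this way to obtain $(pe)(T)=p[T]e(T)$, it is both simpler and cleaner to apply that decomposition directly to $(pq^{-1}e)(T)$ and skip the problematic step altogether.

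Two minor remarks. First, your linear-factor case $(s-a)$ is vacuous here: since $\mathbb{R}\subseteq\overline{D_\omega}$, the hypothesis forces $q$ to have no real zeros, so only irreducible quadratics occur. Second, for the same reason $q(0)\neq 0$, so $1/q$ is actually bounded near the origin and your ``$O(|s|^{-k})$ growth at $0$'' is not needed.
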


\begin{proof}
First, it is already shown in \cite[Theorem 5.9]{MS24} that if the intrinsic polynomial $q$ does not admit any zeros in the closed double sector $\overline{D_\omega}$, the polynomial functional calculus $q[T]$ turns out to be bijective. Moreover, since $q$ does not admit any zeros in $\overline{D_\omega}$ there also exists some larger angle $\theta\in(\omega,\frac{\pi}{2})$ such that $q$ does not admit any zeros in $D_\theta$. Since $pq^{-1}$ is clearly polynomially bounded at zero and at infinity, we then clearly have $pq^{-1}\in\mathcal{SH}_R^\poly(D_\theta)$. Let now $e(s)=\frac{s^m}{(1+s^2)^m}$ be a regularizer of $pq^{-1}$ according to Definition~\ref{defi_Right_Hinfty} and we will prove the identity \eqref{Eq_Rational_functional_calculus}. \medskip

For the inclusion $\text{\grqq}\subseteq\text{\grqq}$, we consider $p(s)=\sum_{k=0}^Np_Ns^N$ and use the left-linearity of the right $\omega$-functional, as well as the product rule $(s^kq^{-1}e)(T)=T^k(q^{-1}e)(T)$ of the intrinsic polynomial functional calculus \cite[Proposition 3.15]{MS24}, to write
\begin{equation*}
(pq^{-1}e)(T)=\sum\limits_{k=0}^Np_k(s^kq^{-1}e)(T)=\sum\limits_{k=0}^Np_kT^k(q^{-1}e)(T)=p[T](q^{-1}e)(T).
\end{equation*}
Multiplying this equation from the right with $e(T)^{-1}$, and use the intrinsic rational functional calculus $(eq^{-1})(T)=e[T]q[T]^{-1}$ from \cite[Theorem 3.16]{MS24}, we furthermore get
\begin{equation}\label{Eq_Rational_functional_calculus_1}
(pq^{-1}e)(T)e(T)^{-1}=p[T]q[T]^{-1}e[T]e(T)^{-1}\subseteq p[T]q[T]^{-1}.
\end{equation}
Performing the closure of both sides of this equation then gives the first inclusion
\begin{equation*}
(pq^{-1})(T)=\overline{(pq^{-1}e)(T)e(T)^{-1}}\subseteq\overline{p[T]q[T]^{-1}}.
\end{equation*}
For the inverse inclusion $\text{\grqq}\supseteq\text{\grqq}$ let $v\in\dom(p[T]q[T]^{-1})$, and consider $v_n:=r_n[T]^mv$, using the operators $r_n[T]$ from \eqref{Eq_dom_ran_density_2}. Using the already proven inclusion \eqref{Eq_Rational_functional_calculus_1}, we get
\begin{equation*}
(pq^{-1}e)(T)e(T)^{-1}v_n=p[T]q[T]^{-1}v_n=\sum_{k=0}^Np_kT^kq[T]^{-1}r_n[T]^mv_n=\sum_{k=0}^Np_kr_n[T]^mT^kq[T]^{-1}v_n.
\end{equation*}
We know from \eqref{Eq_dom_ran_density_5} and Theorem~\ref{thm_Domain_dense}~ii), with the vector $u=T^kq[T]^{-1}v$, that there converges
\begin{equation*}
\lim_{n\rightarrow\infty}r_n[T]^mT^kq[T]^{-1}v=T^kq[T]^{-1}v,\qquad k\in\{0,\dots,N\}.
\end{equation*}
Hence there also converges the whole sum
\begin{equation*}
\lim_{n\rightarrow\infty}(pq^{-1}e)(T)e(T)^{-1}v_n=\sum_{k=0}^Np_kT^kq[T]^{-1}v=p[T]q[T]^{-1}v.
\end{equation*}
There clearly also converges $\lim_{n\to\infty}v_n=v$ by \eqref{Eq_dom_ran_density_5}, which then implies that
\begin{equation*}
(v,p[T]q[T]^{-1}v)\in\overline{(pq^{-1}e)(T)e(T)^{-1}}=(pq^{-1})(T).
\end{equation*}
Since this is true for every $v\in\dom(p[T]q[T]^{-1})$, we have proven the inclusion
\begin{equation*}
p[T]q[T]^{-1}\subseteq(pq^{-1})(T),
\end{equation*}
in the sense of multivalued operators. Finally, since $(pq^{-1})(T)$ is closed, we can also close the left hand side and end up with the inclusion $\overline{p[T]q[T]^{-1}}\subseteq(pq^{-1})(T)$.
\end{proof}

\section{Concluding remarks}

The $H^\infty$-functional calculi also have a broader context  in the recently introduced fine structures on the $S$-spectrum. These are function spaces of nonholomorphic functions derived from the Fueter-Sce extension theorem, which connects slice hyperholomorphic and axially monogenic functions, the two main notions of holomorphicity in the Clifford setting. The connection is established through powers of the Laplace operator in a higher dimension, see \cite{Fueter,TaoQian1,Sce} and also the translation \cite{ColSabStrupSce}. \medskip

The fine structures on the $S$-spectrum constitute a set of function spaces and their related functional calculi. These structures have been introduced and studied in recent works \cite{CDPS1,Fivedim,Polyf1,Polyf2}, while their respective $H^\infty$-versions are investigated in \cite{MILANJPETER,MPS23}. \medskip

Unlike complex holomorphic function theory, the noncommutative setting of the Clifford algebra allows multiple notions of hyperholomorphicity for vector fields. Consequently, different spectral theories emerge, each based on the concept of spectrum that relies on distinct Cauchy kernels. The spectral theory based on the $S$-spectrum was inspired by quaternionic quantum mechanics (see \cite{BF}), is associated with slice hyperholomorphicity, and began its development in 2006. A comprehensive introduction can be found in \cite{CGK}, with further explorations done in \cite{ACS2016,AlpayColSab2020,ColomboSabadiniStruppa2011}. Applications on fractional powers of operators are investigated in \cite{CGdiffusion2018,CG18,FJBOOK} and some results from classical interpolation theory, see \cite{BERG_INTER,BRUDNYI_INTER,Ale_INTER,TRIEBEL}, have been recently extended into this setting \cite{COLSCH}. In order to fully appreciate the spectral theory on the $S$-spectrum, we recall that it applies to sets of noncommuting operators, for example with the identification $(T_1,...,T_{2^n})\leftrightarrow T=\sum_AT_Ae_A$ it can be seen as a theory for several operators. \medskip

It is important to highlight that in the hypercomplex setting there exists another spectral theory based on the monogenic spectrum, which was initiated by Jefferies and McIntosh in \cite{JM}. This theory is founded on monogenic functions, which are functions in the kernel of the Dirac operator, and their associated Cauchy formula (see \cite{DSS} for a comprehensive treatment). \medskip

The monogenic spectral theory has been extensively developed and is well described in the seminal works of Jefferies \cite{JBOOK} and Tao \cite{TAOBOOK}. These texts provide an in-depth exploration of the theory, with particular emphasis on the $H^\infty$-functional calculus in the monogenic setting. \medskip

The idea of the corresponding regularization procedure has been introduced by A. McIntosh in \cite{McI1} and further explored in various studies as \cite{MC10,MC97,MC06,MC98}. It is extensively used in the investigation of evolution equations, particularly in maximal regularity and fractional powers of differential operators. Additionally, there exists a close relation to Kato's square root problem and pseudo-differential operators.

\section*{Declarations and statements}

\textbf{Data availability}. The research in this paper does not imply use of data. \medskip

\textbf{Conflict of interest}. The authors declare that there is no conflict of interest.

\end{document}